\tikzstyle{black dot}=[fill=black, draw=black, shape=circle, minimum size=3pt, inner sep=0pt]
\tikzstyle{black dot small}=[fill=black, draw=black, shape=circle, minimum size=3pt, inner sep=0pt]
\tikzstyle{big white circle}=[fill=white, draw=black, shape=circle, minimum width=0.75cm]
\tikzstyle{white dot big}=[fill=white, draw=black, shape=circle, inner sep=1pt]
\tikzstyle{white dot}=[fill=white, draw=black, shape=circle, minimum size=3pt, inner sep=0pt]
\tikzstyle{flat box}=[fill=white, draw=black, shape=rectangle, minimum width=2.5cm, minimum height=0.5cm]
\tikzstyle{square}=[fill=white, draw=black, shape=rectangle]
\tikzstyle{flat box 2}=[fill=white, draw=black, shape=rectangle, minimum height=0.5cm, minimum width=1.0cm]
\tikzstyle{over }=[front]
\tikzstyle{theta}=[fill=black, draw=black, shape=ellipse, minimum height=6pt, minimum width=6pt, inner sep=0pt]
\tikzstyle{thetabig}=[fill=black, draw=black, shape=ellipse, minimum width=1cm, minimum height=0.01cm]
\tikzstyle{thetainv}=[fill=white, draw=black, shape=ellipse, minimum height=6pt, minimum width=6pt, inner sep=0pt]
\tikzstyle{thetabinv}=[fill=white, draw=black, shape=ellipse, minimum width=1cm, minimum height=0.01cm]
\tikzstyle{black over}=[fill=white, draw=black, shape=circle]
\tikzstyle{mid arrow}=[-, postaction={on each segment={mid arrow}}]
\tikzstyle{end arrow}=[->]
\tikzstyle{red mid arrow}=[-, draw={rgb,255: red,214; green,42; black,51}, postaction={on each segment={mid arrow}}, line width=1.1pt]
\tikzstyle{reddots}=[-,dotted, draw={rgb,255: red,214; green,42; blue,51},line width=1pt]
\tikzstyle{blue}=[-, draw=black, line width=1.1pt]
\tikzstyle{blue mid arrow}=[-, draw={rgb,255: red,23; green,37; black,167}, postaction={on each segment={mid arrow}}, line width=1.1pt]
\tikzstyle{over}=[-, link]
\tikzstyle{mapsto}=[{|->}]
\tikzstyle{blue arrow}=[draw=blue, ->, line width=1.1pt]
\tikzstyle{blue}=[-, draw=blue, line width=1.1pt]
\tikzstyle{black over}=[-, link2, line width=1.1pt]
\tikzset{
  on each segment/.style={
    decorate,
    decoration={
      show path construction,
      moveto code={},
      lineto code={
        \path [#1]
        (\tikzinputsegmentfirst) -- (\tikzinputsegmentlast);
      },
      curveto code={
        \path [#1] (\tikzinputsegmentfirst)
        .. controls
        (\tikzinputsegmentsupporta) and (\tikzinputsegmentsupportb)
        ..
        (\tikzinputsegmentlast);
      },
      closepath code={
        \path [#1]
        (\tikzinputsegmentfirst) -- (\tikzinputsegmentlast);
      },
    },
  },
  mid arrow/.style={postaction={decorate,decoration={
        markings,
        mark=at position .5 with {\arrow[#1]{stealth}}
      }}},
}
\tikzset{%
  link/.style    = { white, double = blue, line width =2.0pt,
                     double distance = 1.1pt },
    link2/.style    = { white, double = black, line width = 1.8pt,
  	double distance = 0.4pt },
  channel/.style = { white, double = blue, line width = 0.8pt,
                     double distance = 0.6pt },
}
\DeclareRobustCommand{\em}{%
	\@nomath\em \if b\expandafter\@car\f@series\@nil
	\normalfont \else \slshape \fi}
\numberwithin{equation}{section}
\numberwithin{equation}{section}
\newtheoremstyle{style1}
{13pt}
{13pt}
{}
{}
{\normalfont\bfseries}
{.}
{.5em}
{}
\theoremstyle{style1}
\newtheorem{definition}{Definition}[section]
\newtheorem{remark}[definition]{Remark}
\newtheorem*{repd@theorem}{\repd@title}
\newcommand{\newrepdtheorem}[2]{%
	\newenvironment{repd#1}[1]{%
		\def\repd@title{#2 \ref{##1}}%
		\begin{repd@theorem}}%
		{\end{repd@theorem}}}
\newcommand{\catf}[1]{{\mathsf{#1}}}
\newtheoremstyle{style2}
{13pt}
{13pt}
{\slshape}
{}
{\normalfont\bfseries}
{.}
{.5em}
{}
\theoremstyle{style2}
\newtheorem*{rep@theorem}{\rep@title}
\newcommand{\newreptheorem}[2]{%
	\newenvironment{rep#1}[1]{%
		\def\rep@title{#2 \ref{##1}}%
		\begin{rep@theorem}}%
		{\end{rep@theorem}}}
\newtheorem{lemma}[definition]{Lemma}
\newtheorem{theorem}[definition]{Theorem}
\newtheorem{proposition}[definition]{Proposition}
\newtheorem{corollary}[definition]{Corollary}
\newcommand{\spaceplease}{\needspace{5\baselineskip}}
\newcommand{\Map}{\catf{Map}}
\newcommand{\Envint}{\catf{U}_{\!\int}\,}
\newcommand{\ra}[1]{\xrightarrow{\ #1 \ }}
\newcommand{\Co}{\cat{C}_{\text{\normalfont \bfseries !}}}
\newcommand{\colimsub}[1]{\underset{#1}{\operatorname{colim}}\,}
\newcommand{\As}{\catf{As}}
\newcommand{\ModAlg}{\catf{ModAlg}\,}
\newcommand{\CycAlg}{\catf{CycAlg}}
\newcommand{\cat}[1]{\mathcal{#1}}
\newcommand{\Bimodf}{\catf{Bimod}^\catf{f}}
\newcommand{\open}{\catf{O}}
\newcommand{\End}{\catf{End}}
\newcommand{\CF}{\catf{CF}}
\newcommand{\id}{\operatorname{id}}
\newcommand{\Cat}{\catf{Cat}}
\newcommand{\vect}{\catf{vect}}
\let\to\undefined
\newcommand{\to}{\longrightarrow}
\let\mapsto\undefined
\newcommand{\mapsto}{\longmapsto}
\newcommand{\Rexf}{\catf{Rex}^{\mathsf{f}}}
\newcommand{\Lexf}{\catf{Lex}^\mathsf{f}}
\newcommand{\Vect}{\catf{Vect}}
\newcommand{\CGraphs}{\cat{C}\text{-}\catf{Graphs}}
\newcommand{\sn}{\catf{sn}}
\newcommand{\snc}{\catf{sn}_\cat{C}}
\newcommand{\SNC}{\catf{SN}_\cat{C}}
\newcommand{\opp}{\text{opp}}
	\newcommand{\Proj}{\catf{Proj}\,}
\DeclareMathSymbol{\Phiit}{\mathalpha}{letters}{"08} 
\DeclareMathSymbol{\Psiit}{\mathalpha}{letters}{"09}
\DeclareMathSymbol{\Sigmait}{\mathalpha}{letters}{"06}
\DeclareMathSymbol{\Xiit}{\mathalpha}{letters}{"04}
\DeclareMathSymbol{\Piit}{\mathalpha}{letters}{"05}\let\Pi\undefined\newcommand{\Pi}{\Piit}
\DeclareMathSymbol{\Gammait}{\mathalpha}{letters}{"00}
\DeclareMathSymbol{\Omegait}{\mathalpha}{letters}{"0A}\let\Omega\undefined\newcommand{\Omega}{\Omegait}
\DeclareMathSymbol{\Upsilonit}{\mathalpha}{letters}{"07}
\DeclareMathSymbol{\Thetait}{\mathalpha}{letters}{"02}
\DeclareMathSymbol{\Lambdait}{\mathalpha}{letters}{"03}\let\Lambda\undefined\newcommand{\Lambda}{\Lambdait}
\let\Phi\undefined\newcommand{\Phi}{\Phiit}
\let\Sigma\undefined\newcommand{\Sigma}{\Sigmait}
\let\Psi\undefined\newcommand{\Psi}{\Psiit}
\let\Gamma\undefined\newcommand{\Gamma}{\Gammait}
\newenvironment{pnum}{\begin{enumerate}[label=(\roman*)]}{\end{enumerate}}
\renewcommand\section{\@startsection {section}{1}{\z@}%
	{-3.5ex \@plus -1ex \@minus -.2ex}%
	{2.3ex \@plus.2ex}%
	{\normalfont\scshape\centering}}
\titleformat{\subsection}[runin]
{\normalfont\bfseries}
{\thesubsection}
{0.5em}
{}
[.]
\definecolor{Blue}  {rgb} {0.282352,0.239215,0.803921}
\definecolor{Green} {rgb} {0.133333,0.545098,0.133333}
\definecolor{Red}   {rgb} {0.803921,0.000000,0.000000}
\definecolor{Violet}{rgb} {0.580392,0.000000,0.827450}
\begin{document}
	
\vspace*{0.5cm}
		\begin{center}	\textbf{\large{Categorified Open Topological Field Theories}} \\	\vspace{1cm}	{\large Lukas Müller $^{a}$} \ and \ \ {\large Lukas Woike $^{b}$}\\ 	\vspace{5mm}{\slshape $^a$ Perimeter Institute  \\  N2L 2Y5 Waterloo \\ Canada \\ lmueller@perimeterinstitute.ca}	\\[7pt]	{\slshape $^b$ Université Bourgogne Europe\\ CNRS\\ IMB UMR 5584\\ F-21000 Dijon\\ France \\ lukas.woike@ube.fr}
			\end{center}	\vspace{0.3cm}	
	\begin{abstract}\noindent 
		In this short note, we classify linear categorified open topological field theories in dimension two by pivotal Grothendieck-Verdier categories, a type of monoidal category equipped with a weak, not necessarily rigid duality. In combination with recently developed string-net techniques, this leads to a new description of the spaces of conformal blocks of Drinfeld centers $Z(\mathcal{C})$ of pivotal finite tensor categories $\mathcal{C}$ in terms of the modular envelope of the cyclic associative operad. If $\mathcal{C}$ is unimodular, we prove that the space of conformal blocks inherits the structure of a module over the algebra of class functions of $\mathcal{C}$ for every free boundary component. As a further application, we prove that the sewing along a boundary circle for the modular functor for $Z(\mathcal{C})$ can be decomposed into a sewing procedure along an interval and the application of the partial trace. Finally, we construct mapping class group representations from Grothendieck-Verdier categories that are not necessarily rigid and make precise how these generalize existing constructions.
		\end{abstract}

\tableofcontents

\spaceplease
\section{Introduction and summary}
It is a classical result in quantum topology that
any symmetric Frobenius algebra $A$ in the category of vector spaces over a fixed field $k$ 
yields an open two-dimensional topological field theory; 
this follows from the results of~\cite{costellotcft,laudapfeiffer,wahlwesterland}.
A symmetric Frobenius algebra $A$ is exactly
a cyclic associative algebra in vector spaces,
where the notion of cyclic algebra over 
a cyclic operad~\cite{gk} is set up such 
that the symmetry of the non-degenerate 
pairing $\kappa :  A \otimes A \to k$ is included.
We can now rephrase the above result by saying  that a cyclic associative algebra in vector spaces yields a modular algebra 
in the sense of~\cite{gkmod} over the open surface operad.

This note is concerned with a higher categorical analogue of this statement:
In~\cite{cyclic} the framework of~\cite{costello} is adapted to define, for any category-valued
cyclic operad $\cat{O}$, cyclic $\cat{O}$-algebras  with values in a symmetric monoidal bicategory $\cat{S}$, up to coherent isomorphism. 
In the cases of interest in quantum algebra, $\cat{S}$ can be for example the symmetric monoidal bicategory $\Lexf$
of finite categories over an algebraically closed field $k$~\cite{etingofostrik} (linear abelian categories with finite-dimensional morphism spaces,
 enough projective objects, 
 finitely many simple objects 
 up to isomorphism and finite length for every object). The 1-morphisms are left exact functors, the 2-morphisms are linear natural transformations, and the monoidal product is the Deligne product $\boxtimes$. 
 By \cite[Theorem~4.12]{cyclic} cyclic associative algebras in $\Lexf$ are equivalent to 
 \emph{pivotal Grothendieck-Verdier categories} in $\Lexf$ in the sense of~\cite{bd}.
 A \emph{Grothendieck-Verdier category} $\cat{C}$ in $\Lexf$ is a monoidal category in $\Lexf$
 equipped with an object $K\in \cat{C}$, the so-called \emph{dualizing object},
 such that the hom functors $\cat{C}(K,X \otimes-)$ are representable with representing object $DX \in \cat{C}$, i.e.\ $\cat{C}(K,X \otimes-)\cong \cat{C}(DX,-)$, such that the functor $D:\cat{C}^\opp \to \cat{C}$, called \emph{Grothendieck-Verdier duality},
  is an equivalence. One has canonical isomorphisms $DI\cong K$ and $D^2 K \cong K$, where $I$ is the monoidal unit.
  A \emph{pivotal structure} on a Grothendieck-Verdier category $\cat{C}$ is a monoidal isomorphism $\omega : \id_\cat{C} \ra{\cong} D^2$ such that its component $\omega_K:K\to D^2 K$ at the dualizing object coincides with the canonical isomorphism $K\cong D^2 K$. 
  A \emph{pivotal finite tensor category}
   in the sense of~\cite{etingofostrik,egno},
  which by definition has a rigid monoidal product (every object has left and right duals that in this case coincide),
   is an example of a pivotal Grothendieck-Verdier category, but the notion of a pivotal Grothendieck-Verdier category is more general. A rich source of Grothendieck-Verdier categories are vertex operator algebras~\cite{alsw}.
 We should mention that the conventions regarding Grothendieck-Verdier duality in~\cite{bd} are dual to ours. The conventions from~\cite{bd} are recovered if we consider cyclic associative algebras in $\Rexf$, the symmetric monoidal bicategory of 
 finite categories, \emph{right exact functors} and linear natural transformations. 
 In this note, we treat both situations in parallel.

  We show that a cyclic associative $\Lexf$-valued or $\Rexf$-valued algebra, i.e.\ a pivotal
  Grothendieck-Verdier category in $\Lexf$ or $\Rexf$ gives canonically rise 
  to a modular algebra over the open surface operad, the operad of compact oriented surfaces with at least one boundary component per connected component and marked intervals in the boundary, see Section~\ref{secopen} for details.
  This is a higher categorical incarnation of the connection between open topological field theories 
  and symmetric Frobenius algebras
   mentioned in the beginning.
   Note that the cyclic operad obtained by restriction of the open surface operad to disks with marked intervals is exactly the cyclic associative operad. The modular algebra over the open surface operad that we built from a pivotal Grothendieck-Verdier category uniquely extends
   the pivotal Grothendieck-Verdier category as cyclic associative algebra in that sense.
   
  	\begin{reptheorem}{thmclassopen}[Classification of open topological field theories in dimension two with values in $\Lexf$ or $\Rexf$]
  The 2-groupoid of $\Lexf$-valued or $\Rexf$-valued open topological field theories in dimension two
  is equivalent
  to the 2-groupoid of pivotal Grothendieck-Verdier categories in $\Lexf$ or $\Rexf$, respectively. 
  In particular, for every pivotal Grothendieck-Verdier category in $\Lexf$ or $\Rexf$, the extension $\Co$
  to an open topological field theory of dimension two is unique up to equivalence.
  \end{reptheorem}
  
  The pivotal Grothendieck-Verdier categories appearing in this classification are not necessarily rigid, but we characterize the rigid case topologically in Theorem~\ref{thmrigid} in terms of adjunctions between the value of the corresponding open field theory on different surfaces. It turns out that, just like in~\cite{bjsdualizability}, it is more natural to require duals and the pivotal structure to exist only for the subcategory of projective objects.

To give  further context, we should mention that a modular algebra over the open surface operad is
 the `open version' of a modular functor~\cite{Segal,ms89,turaev,tillmann,baki}. We should warn the reader that the definitions given in these references do not all agree with each other. The precise framework that covers the open case and that we will use in this article will be set up in Section~\ref{secopen}.  
 Given a pivotal Grothendieck-Verdier category $\cat{C}$ in $\Lexf$ or $\Rexf$, one obtains the extension to the open surface operad as follows:
 As a cyclic associative algebra, $\cat{C}$ extends to a modular 
  algebra $\Co$
  over the derived modular envelope of the associative operad, see \cite{costello}  for the definition of the derived modular envelope and
  \cite[Proposition~7.1]{cyclic} for the extension procedure for algebras. The resulting modular algebra $\Co$ is called the \emph{modular extension} of $\cat{C}$. We use a refinement of
  \cite[Theorem~B]{giansiracusa} and \cite{costello,costellographs,costellotcft} to prove that $\Co$ is an open topological field theory.
  These results build in turn on a large body of work concerned with the ribbon graph description of surfaces
  \cite{harer86,penner,kontsevichintersection,kon94}.
  
  The equivalence of Theorem~\ref{thmclassopen} then becomes exactly the assignment $\cat{C}\mapsto \Co$. 
Concretely, for a surface (for us always compact oriented) with at least one boundary component per connected component and $n$ parametrized intervals embedded in its boundary
$\Co$ provides us with  
a left (or right) exact functor $\Co(\Sigma;-):\cat{C}^{\boxtimes n}\to\vect$ to the category of finite-dimensional $k$-vector spaces. 
The mapping class group $\Map(\Sigma)$ acts through natural automorphisms of $\Co(\Sigma;-)$. 
We give a formula for $\Co(\Sigma;-)$ in~\eqref{eqnformulaChat}.

One of the main results of this note concerns exactly these mapping class group representations:
Suppose that $\cat{C}$ is a pivotal finite tensor category~\cite{etingofostrik,egno} (a finite linear category with rigid bilinear monoidal product, simple unit and a pivotal structure) whose two-sided duality we denote by $-^\vee$.
Consider now the Drinfeld center $Z(\cat{C})$ of $\cat{C}$, i.e.\ the category of pairs of objects $X\in\cat{C}$ and half braidings $X\otimes-\cong- \otimes X$.
There is a forgetful functor $U:Z(\cat{C})\to \cat{C}$ whose left and right adjoint we denote by $L:\cat{C}\to Z(\cat{C})$ and $R:\cat{C}\to Z(\cat{C})$, respectively, see~\cite{shimizuunimodular} for a description of these adjoints.
If $\cat{C}$ is spherical in the sense of \cite{dsps}, then $Z(\cat{C})$ is a so-called \emph{modular category} by~\cite{shimizuribbon}, a ribbon braided monoidal category with a non-degeneracy condition on the braiding.  
	From a modular category $\cat{A}$, one may build a modular functor $\mathfrak{F}_\cat{A}$ using the construction from~\cite{lyubacmp,lyu,lyulex}.
In particular, we obtain (possibly projective) mapping class group representations $\mathfrak{F}_\cat{A}(\Sigma;X_1,\dots,X_n)$
for all surfaces $\Sigma$ with labels $X_1,\dots,X_n \in \cat{A}$
for the boundary components of $\Sigma$. 
One calls $\mathfrak{F}_\cat{A}(\Sigma;X_1,\dots,X_n)$ the \emph{space of conformal blocks}
 for $\cat{A}$, $\Sigma$ and the labels $X_1,\dots,X_n$.
	The modular functor for $Z(\cat{C})$ can be built even if $\cat{C}$ is not spherical~\cite[Section~8.4]{brochierwoike} using~\cite{mwcenter}.
	The following result relates the mapping class group representations coming from $\Co$ to the ones coming from $\mathfrak{F}_\cat{Z(\cat{C})}$
	using the left adjoint $L:\cat{C}\to Z(\cat{C})$:

	\begin{reptheorem}{thmpcomp2d2}
		Let $\cat{C}$ be a pivotal finite tensor category and $\Sigma$ a surface 
		with $n$ boundary components, at least
		one boundary component per connected component,
		and exactly one interval embedded in each boundary component.
		Then there are for $X_1,\dots,X_n \in \cat{C}$ canonical natural $\Map(\Sigma)$-equivariant
		isomorphisms
		\begin{align}
			\Co(\Sigma;X_1,\dots,X_n) \ra{\cong} \mathfrak{F}_{Z(\cat{C})}(\Sigma;LX_1,\dots,LX_n)  \label{comparisoneqn}
		\end{align}
		between the value of the modular extension $\Co$ of $\cat{C}$ on $\Sigma$ and the space of conformal blocks for $Z(\cat{C})$ on $\Sigma$.
	\end{reptheorem}
If $\cat{C}$ is a modular category, then the right hand side~\eqref{comparisoneqn}
is the space of conformal blocks for the modular category $\bar{\cat{C}}\boxtimes \cat{C}$ (the bar indicates inversion of the braiding and the ribbon structure; passing to $\bar{\cat{C}} \boxtimes \cat{C}$ is often called the \emph{combination of left and right movers}), but surprisingly the left hand side can be defined without rigidity. This allows us to prove the following statement:

\begin{repcorollary}{cormcgforGV}
	Let $\Sigma$ be a surface with $n$ boundary components, at least one per connected component, and let $\cat{C}$ be a pivotal	 Grothendieck-Verdier category in $\Lexf$ or $\Rexf$. 
	Then the modular extension  $\Co(\Sigma;X_1,\dots,X_n)$ 
	of $\cat{C}$
	evaluated on $\Sigma$ and boundary labels in $\cat{C}$ carries a representation of the mapping class group of $\Sigma$
	that generalizes the space of conformal blocks,  as mapping class group representation, after combination of left and right movers in the following sense:
	If $\cat{C}$ is a modular category, then
	$\Co(\Sigma;X_1,\dots,X_n)$ is $\Map(\Sigma)$-equivariantly isomorphic to the space of conformal blocks $\mathfrak{F}_{\bar{\cat{C}} \boxtimes\cat{C}}(\Sigma;\widetilde X_1,\dots,\widetilde X_n)$ for $\bar{\cat{C}}\boxtimes \cat{C}$ on $\Sigma$ with boundary labels $\widetilde X_i = \int^{Y \in \cat{A}} Y^\vee \boxtimes X_i \otimes Y$ for $1\le i\le n$. 
\end{repcorollary}
Corollary~\ref{cormcgforGV} is to the best of our knowledge the first construction
of mapping class group representations from Grothendieck-Verdier categories that do not necessarily have the property of being rigid. It applies in particular to ribbon Grothendieck-Verdier categories in the sense of~\cite{bd}.
The construction of mapping class group representations from Grothendieck-Verdier categories was posed as an open problem in~\cite[Section~4]{alsw}.
Note that Corollary~\ref{cormcgforGV} does not directly generalize Lyubashenko's modular functor construction to  Grothendieck-Verdier categories. 
What we find is instead a partial generalization of the modular functor obtained \emph{after} combining left and right movers.

We discuss the following further applications of our main results:
Suppose that $\cat{C}$ is a pivotal finite tensor category that is \emph{unimodular}, i.e.\ it
 has the additional property that its distinguished invertible object $\alpha \in \cat{C}$~\cite{eno-d} controlling the quadruple dual via $-^{\vee\vee\vee\vee}\cong \alpha \otimes-\otimes\alpha^{-1}$ is isomorphic to the unit.
 Then we use factorization homology techniques similar to those in \cite{brochierwoike} to show that,
  for each free boundary component,
 $\Co(\Sigma;X_1,\dots,X_n)$ inherits the structure of a module over the algebra of class functions of $\cat{C}$:

\begin{reptheorem}{thmcf}
	Let $\cat{C}$ be a unimodular pivotal finite tensor category and $\Sigma$ a connected  surface with at least one boundary component,
	$n\ge 0$ boundary intervals labeled by $X_1,\dots,X_n \in \cat{C}$ and
	$\ell \ge 0$ free boundary circles. 
	Then the vector space $\Co(\Sigma;X_1,\dots,X_n)$ associated by the modular extension $\Co$
	of $\cat{C}$ to $\Sigma$ and the labels $X_1,\dots,X_n$
	is naturally a module over $\CF(\cat{C})^{\otimes \ell}$, the $\ell$-th tensor power of the algebra of class functions of $\cat{C}$.
\end{reptheorem}

Finally, we prove a technical gluing result 
for the modular functor $\mathfrak{F}_{Z(\cat{C})}$ when $\cat{C}$ is a pivotal finite tensor category. This gluing result is motivated as follows:
Theorem~\ref{thmpcomp2d2} compares $\Co$ with the restriction of the open-closed modular functor $\mathfrak{F}_{Z(\cat{C})}$ to its open part.
This means that Theorem~\ref{thmpcomp2d2} does not contain a comparison for gluing operations along circles, and indeed this turns out to be a subtle point because open topological field theories do not `know' about the gluing along boundary circles. 
As a remedy, we conclude first  in Lemma~\ref{lemmapartialtrace}
that
for any  surface $\Sigma \in \open(n)$ and any disk $D$ 
embedded in its interior, the embedding $\Sigma \setminus D \to \Sigma$ induces a map
$p:\Co(\Sigma\setminus D;-)\to \Co(\Sigma;-)$ that we refer to as \emph{partial trace} (we justify the name in Remark~\ref{rempartialtrace}). The construction of $p$ uses the string-net model for $\Co$, and we insist on the point that our construction of $p$ needs rigidity. 
We then prove:

\begin{reptheorem}{thmsewing}
	Let $\cat{C}$ be a pivotal finite tensor category and $\Sigma$ a surface with $n+2$ boundary components, $n\ge 1$, at least one per connected component.
	Consider a sewing $\Sigma \to \Sigma'$ that 
	identifies two boundary components of $\Sigma$. 
	Then for $X,Y_1,\dots,Y_n \in \cat{C}$, the following diagram commutes:
	\begin{equation}
		\begin{tikzcd}
			\mathfrak{F}_{Z(\cat{C})}(\Sigma; LX^\vee, LX,LY_1,\dots,LY_n)   \ar[]{rr}{\cong}  \ar[swap]{dd}{\text{$\mathfrak{F}_{Z(\cat{C})}$ evaluated on sewing along circle}} && \Co(\Sigma;X^\vee,X,Y_1,\dots,Y_n) \ar{d}{\text{$\Co$ evaluated on sewing along an interval}}
			\\ && \Co(\Sigma\setminus D ;Y_1,\dots,Y_n) \ar{d}{\text{partial trace map}}
			\\ \mathfrak{F}_{Z(\cat{C})}(\Sigma'; LY_1,\dots,LY_n) \ar[]{rr}{\cong} && \Co(\Sigma ;Y_1,\dots,Y_n)\ , 
		\end{tikzcd} 
	\end{equation}
	where the horizontal isomorphisms
	are the ones from Theorem~\ref{thmpcomp2d2}.
\end{reptheorem}
	
	\vspace*{0.2cm}\textsc{Acknowledgments.} We thank Yang Yang for helpful discussions related to this project. LM gratefully acknowledges support of the Simons Collaboration on Global Categorical Symmetries. Research at Perimeter Institute is supported in part by the Government of Canada through the Department of Innovation, Science and Economic Development and by the Province of Ontario through the Ministry of Colleges and Universities. The Perimeter Institute is in the Haldimand Tract, land promised to the Six Nations. LW gratefully acknowledges support by the ANR project CPJ n°ANR-22-CPJ1-0001-01 at the Institut de Mathématiques de Bourgogne (IMB). The IMB receives support from the EIPHI Graduate School (contract ANR-17-EURE-0002).

\section{Classification of categorified open topological field theories\label{secopen}}
Throughout this article, a \emph{surface} refers to a compact oriented two-dimensional smooth manifold $\Sigma$ with an embedding of a disjoint union of standard circles and intervals into the boundary $\partial\Sigma$ of $\Sigma$. We call the complement of the embedding the \emph{free boundary}, the image of the circles the \emph{closed boundary}, and the image of the intervals the \emph{open boundary}.
This embedding of circles and intervals into the boundary of $\Sigma$ is called the \emph{boundary parametrization}.
All diffeomorphisms between surfaces are required to preserve the orientation and the boundary parametrization.

We now recall very briefly
the definition of the
\emph{open surface operad} $\open$,
a groupoid-valued operad. The full  definition is given  in~\cite[Section~3]{sn}. The necessary framework to treat category-valued cyclic and modular operads up to coherent isomorphism is worked out in \cite[Section~2]{cyclic} following~\cite{costello}.
Similar definitions of $\open$, on which our definition builds, can be found in \cite{costellotcft,giansiracusa}. The relation to the work of Lazaroiu~\cite{Lazaroiu} and Moore-Segal~\cite{mooresegal}, who first formalized open field theories, is explained in \cite[Section~1.5]{costellotcft}.
The groupoid $\open(n)$
of arity $n$ operations for $n\ge -1$
(to be read as $n$ inputs and one output, meaning that the total arity is $n+1\ge 0$) is the groupoid whose objects
are connected surfaces with at least one boundary component and a boundary parametrization that embeds $n+1$ intervals into the boundary, but no circles. The morphisms are mapping classes (isotopy classes of diffeomorphisms preserving the orientation and boundary parametrization).
The operadic composition glues surfaces along the parametrized intervals.
The operad $\open$ is cyclic in the sense of~\cite{gk}, i.e.\ the inputs can be consistently exchanged with the output, and moreover modular in the sense of~\cite{gkmod}, i.e.\
it is equipped with self-compositions of operations (again by gluing along intervals).
\begin{definition}
A \emph{categorified open topological field theory} with values in $\Lexf$ or $\Rexf$
is a modular algebra over $\open$ with values in $\Lexf$ or $\Rexf$, respectively. \end{definition}
Explicitly, a categorified open topological field theory has an underlying finite linear category $\cat{C}$ and for any $\Sigma \in \open(n)$ a left or right exact functor $\cat{C}^{\boxtimes (n+1)} \to \vect$ with an action of the mapping class group $\Map(\Sigma)$ of $\Sigma$; moreover, this assignment is compatible with gluing. This is equivalent to the usual definition of open topological field theories as symmetric monoidal functors from the $(\infty,1)$-category $\catf{Bord}_2^\catf{o}$ of open bordisms to $\Lexf$ or $\Rexf$.

We will now state and prove the classification result
for categorified open topological field theories in dimension two:

\begin{theorem}[Classification of open topological field theories in dimension two with values in $\Lexf$ or $\Rexf$]\label{thmclassopen}
	The 2-groupoid of $\Lexf$-valued or $\Rexf$-valued open topological field theories in dimension two
	is equivalent
	to the 2-groupoid of pivotal Grothendieck-Verdier categories in $\Lexf$ or $\Rexf$, respectively.
	In particular, for every pivotal Grothendieck-Verdier category $\cat{C}$ in $\Lexf$ or $\Rexf$, the extension $\Co$
	to an open topological field theory of dimension two is unique up to equivalence. 
\end{theorem}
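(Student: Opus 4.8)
The plan is to chain together three equivalences of $2$-groupoids. First, by definition an $\Lexf$- or $\Rexf$-valued open topological field theory is exactly a modular algebra over $\open$ with values in the respective symmetric monoidal bicategory $\cat{S}$; so the source $2$-groupoid of the claimed equivalence is the $2$-groupoid of modular $\open$-algebras in $\cat{S}$. Second, by \cite[Theorem~4.12]{cyclic} the $2$-groupoid of pivotal Grothendieck-Verdier categories in $\cat{S}$ is equivalent to the $2$-groupoid of cyclic associative ($\As$-)algebras in $\cat{S}$. Hence it suffices to produce a natural equivalence between modular $\open$-algebras in $\cat{S}$ and cyclic $\As$-algebras in $\cat{S}$, for each $\cat{S}\in\{\Lexf,\Rexf\}$.

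The bridge is the derived modular envelope $\mathbb{L}\mathsf{U}\,\As$. It is the homotopically free modular operad on the cyclic operad $\As$ (being the left derived functor of the modular envelope, which is left adjoint to restriction from modular to cyclic operads); consequently, evaluating against the (modular, resp.\ cyclic) endomorphism operad of an object identifies a modular $\mathbb{L}\mathsf{U}\,\As$-algebra in $\cat{S}$ with a cyclic $\As$-algebra in $\cat{S}$. In the derived, bicategorical setting this is the content of \cite[Proposition~7.1]{cyclic}, which moreover shows the extension is unique up to a contractible choice and records it as the modular extension $\widehat{\cat{C}}$. It therefore remains to identify the groupoid-valued modular operad $\mathbb{L}\mathsf{U}\,\As$ with the open surface operad $\open$ of \cite[Section~3]{sn}: granting this, modular $\open$-algebras in $\cat{S}$ are equivalent to modular $\mathbb{L}\mathsf{U}\,\As$-algebras in $\cat{S}$, the composite of the three equivalences proves the Theorem, and the uniqueness clause follows either from the resulting equivalence of $2$-groupoids or directly from the contractibility in \cite[Proposition~7.1]{cyclic}.

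To identify $\mathbb{L}\mathsf{U}\,\As$ with $\open$ I would use a refinement of \cite[Theorem~B]{giansiracusa} and \cite{costello}. At arity $n$, the value $(\mathbb{L}\mathsf{U}\,\As)(n)$ is the homotopy colimit, over the category of connected graphs with $n+1$ legs, of the groupoids of cyclic-order decorations of the vertices, equivalently the classifying space of the category of connected ribbon graphs with $n+1$ legs. Thickening a ribbon graph to the surface-with-boundary of which it is a spine, so that the $n+1$ legs become $n+1$ marked intervals in the free boundary and no boundary circle is marked, defines a functor to $\open(n)$; I would show it induces an equivalence on homotopy colimits by the two standard inputs, namely that every surface in $\open(n)$ is the thickening of some ribbon graph (existence of spines via a handle decomposition) and that the space of ribbon-graph spines of a fixed such surface, with its compatible decoration, is contractible (the dual ribbon-graph decomposition, i.e.\ contractibility of the associated arc complex). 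Finally I would check compatibility of thickening with operadic composition along legs and with the modular self-gluings, so that the arity-wise equivalences assemble into an equivalence of groupoid-valued modular operads and hence induce an equivalence on $\cat{S}$-valued algebras, the latter $2$-groupoids being invariant under such equivalences.

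I expect the main obstacle to be precisely this operadic comparison $\mathbb{L}\mathsf{U}\,\As\simeq\open$. The results of Giansiracusa and Costello are formulated for particular combinatorial models and, in Giansiracusa's case, essentially at the level of the ordinary (non-derived) modular envelope; the work lies in upgrading them to a derived, groupoid-valued, genuinely \emph{modular} statement matching the boundary-parametrization conventions of $\open$ from \cite[Section~3]{sn} — in particular relating legs of graphs to embedded intervals, verifying that the homotopy colimit carries no homotopy above the mapping-class groupoids, and tracking compatibility with the modular self-compositions rather than merely with the cyclic operad structure. Everything downstream — the definitional identification and the invocations of \cite[Theorem~4.12]{cyclic} and \cite[Proposition~7.1]{cyclic} — is then formal.
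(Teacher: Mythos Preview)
Your overall architecture matches the paper's: reduce the claim to an equivalence $\Envint\As\simeq\open$ of groupoid-valued modular operads, then invoke \cite[Theorem~4.12]{cyclic} for the identification of cyclic $\As$-algebras with pivotal Grothendieck-Verdier categories. The paper cites \cite[Theorem~4.2]{mwansular} rather than \cite[Proposition~7.1]{cyclic} for the equivalence between cyclic $\As$-algebras and modular algebras over the envelope, but this is a minor discrepancy in citation.

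The genuine gap is in your identification $\Envint\As\simeq\open$. You propose to obtain it from contractibility of the space of ribbon-graph spines of a fixed surface, i.e.\ contractibility of the associated arc complex. This works for every component \emph{except} the annulus with no boundary intervals (total arity zero): there the arc-complex argument is unavailable because there are no marked intervals to anchor arcs, and in fact the two operads do \emph{not} agree as spaces on this component. The paper computes, via \cite[Theorem~3.6]{mwansular}, that the annulus component of $|B\Envint\As|$ is the dihedral homology of $\As(1)=*$, which is $B\mathrm{O}(2)$; the corresponding component of $|B\open|$ is $B\pi_0(\mathrm{O}(2))\cong B\mathbb{Z}_2$. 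These are not homotopy equivalent (for instance $\pi_2 B\mathrm{O}(2)\cong\mathbb{Z}$), so the verification you flag as the main obstacle --- that ``the homotopy colimit carries no homotopy above the mapping-class groupoids'' --- fails precisely here. The fix, which the paper supplies, is that the comparison map $B\mathrm{O}(2)\to B\mathbb{Z}_2$ does become an equivalence after applying the fundamental groupoid functor $\Pi$; since $\open$ is already groupoid-valued and algebras are taken in a bicategory, this suffices. Giansiracusa's Theorem~B handles all remaining components. So your instinct that the operadic comparison is the delicate step is correct, but the resolution is not a uniform contractibility argument: one component must be treated separately, and there the equivalence holds only after $1$-truncation.
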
	

\begin{proof}
	A cyclic operad can be `freely completed' to a modular one via the (derived) modular envelope of~\cite{costello}. The version of this construction adapted to $\Cat$-valued operads~\cite[Section~7.1]{cyclic}, when applied to the cyclic associative operad $\As$, leads to the modular operad $\Envint \! \As$ that, after applying the nerve functor $B$ and geometric realization $|-|$, yields a topological operad $|B\Envint\! \As|$.
	It comes by \cite[Theorem~B]{giansiracusa} with a map $|B\Envint\! \As| \to |B\open|$
	that induces a bijection on $\pi_0$ (a bijection on the set of path components) and a homotopy equivalence on all components except for the annulus without boundary intervals.
	For the modular operad $|B\Envint\! \As|$, the component of the annulus without boundary intervals is the dihedral homology of the algebra $\As(1)$ of unary operations of the associative operad by \cite[Theorem~3.6]{mwansular}. But $\As(1)$ is just a point, which implies that its dihedral homology is the classifying space $B\text{O}(2)$ of the two-dimensional orthogonal group.
	The topological group $\text{O}(2)$ is the group of diffeomorphisms of the annulus, seen as operation of the operad $\open$ with no boundary intervals. The corresponding mapping class group is the group $\pi_0(\text{O}(2))\cong \mathbb{Z}_2$.
	The restriction of
	$|B\Envint\! \As| \to |B\open|$ to the annulus component is now the projection
	map
	$B\text{O}(2)\to B \pi_0(\text{O}(2))$. Clearly, this map induces an equivalence after applying the fundamental groupoid functor. Therefore, we obtain an equivalence
	\begin{align}
		\Pi    |B\Envint\! \As| \ra{\simeq} \Pi |B\open|\simeq \open \label{eqnequivopen}
	\end{align} of groupoid-valued modular operads.
	By \cite[Proposition 7.1, Remark~7.2 and Example~7.3]{cyclic} a cyclic $\As$-algebra $\cat{C}$ gives rise to a modular $\Pi |B \Envint\!\As|$-algebra $\Co$, and 
	by
	\cite[Theorem~4.2]{mwansular} we have canonical inverse equivalences
	\begin{equation}\label{resextequiv0}
		\begin{tikzcd}
			\CycAlg(\As,\cat{S}) \ar[rrrr, shift left=2,"\text{modular extension}\ \cat{C}\mapsto \Co"] &&\simeq&& \ar[llll, shift left=2,"\text{restriction}"] \ModAlg( \Pi |B \Envint\!\As|,\cat{S}) \ 
		\end{tikzcd}
	\end{equation}
	between the 2-groupoid of cyclic $\As$-algebras with values in any symmetric monoidal bicategory $\cat{S}$
	and modular $ \Pi |B \Envint\!\As|$-algebras in $\cat{S}$. 
	Once we specify $\cat{S}$ to be $\Lexf$ or $\Rexf$, we find on the right hand side categorified open topological field theories thanks to~\eqref{eqnequivopen} while on the left hand side we find pivotal Grothendieck-Verdier categories by~\cite[Theorem~4.12]{cyclic}.
\end{proof}

\section{A topological characterization of rigidity} 
The pivotal Grothendieck-Verdier categories in Theorem~\ref{thmclassopen} are not necessarily rigid.
In this section, we will characterize  those open field theories that correspond to rigid categories.

Let $\mathcal{S}$ be a symmetric monoidal bicategory with monoidal product $\boxtimes$. A \emph{rigid algebra}~\cite[Definition~2.1.1]{decoppet} in $\mathcal{S}$ is an associative algebra $A$ in $\mathcal{S}$, up to coherent isomorphism,
such that the multiplication $\mu:  A\boxtimes A \to A$ and the unit $\eta :I\to A$ have right adjoints $\delta :A \to A\boxtimes A$ and $\varepsilon:A \to I$, respectively, such that the canonical lax bimodule structure on $\delta$ is strong. 
This allows us to define the non-degenerate pairing $\kappa:A\boxtimes A \ra{\mu} A \ra{\varepsilon} I$ on $A$  with copairing $\Delta:I \ra{\eta} A \ra{\delta} A\boxtimes A $. 
A \emph{symmetric rigid algebra} in $\mathcal{S}$ is a cyclic associative algebra in $\mathcal{S}$ whose underlying algebra and non-degenerate  pairing come from a rigid algebra. Explicitly, a symmetric rigid algebra is a rigid algebra equipped with a natural isomorphism $\Sigma :\kappa \to \kappa \circ \sigma$ (here $\sigma : A \boxtimes A \to A \boxtimes A$ is the symmetric braiding) squaring to the identity and satisfying~\cite[Definition~4.1~(H1)]{cyclic}. For $\cat{S}=\Lexf$, the symmetry isomorphism amounts to isomorphisms $\psi_{X,Y}:\cat{A}(K,X\otimes Y)\to \cat{A}(K,Y\otimes X)$ squaring to the identity. Then \cite[Definition~4.1~(H1)]{cyclic} is the cocycle condition $\psi_{X\otimes Y,Z}\circ \psi_{Y\otimes Z,X}\circ \psi_{Z\otimes X,Y}=\id$ for $X,Y,Z\in\cat{A}$ from \cite[eq. (6.2)]{bd}.

Following~\cite{bjsdualizability}, we call a monoidal category $\cat{C} \in \Rexf$ \emph{p-rigid} if all its projective objects have both a left and right dual which is projective. A central result~\cite[Definition-Proposition 4.1]{bjsdualizability} shows that p-rigid categories are exactly the rigid algebras in $\Rexf$. In particular, the tensor product of a p-rigid monoidal category $\cat{C}$ is exact and hence preserves projective objects. Based on this observation,
 we can define a \emph{p-pivotal} category as a p-rigid category for which in addition the subcategory of projective objects is equipped with a monoidal (non-unital) natural isomorphism $-^{\vee\vee}\to \id $. 
\begin{proposition}\label{PropSymrigid}
	Symmetric rigid algebras in $\Rexf$ are p-pivotal categories.
\end{proposition}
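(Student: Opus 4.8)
The plan is to identify both sides of the desired equivalence with the same data by combining the rigid-algebra characterization of p-rigid categories from \cite{bjsdualizability} with the explicit unpacking of a cyclic associative algebra structure given in \cite{cyclic}. First I would recall that, by \cite[Definition-Proposition~4.1]{bjsdualizability}, a rigid algebra in $\Rexf$ is precisely a p-rigid monoidal category $\cat{C}\in\Rexf$; in particular the monoidal product of $\cat{C}$ is exact on projectives, the right adjoint $\delta$ of the multiplication restricts on projectives to the functor $X\mapsto \bigoplus$ (coend) implementing comultiplication, and the counit $\varepsilon$ is the right adjoint of the unit, i.e.\ $\varepsilon \cong \cat{C}(I^\vee,-)\cong\cat{C}(I,-)$ on projectives. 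Thus the non-degenerate pairing $\kappa = \varepsilon\circ\mu$ and its copairing $\Delta$ are exactly the ones exhibiting $^\vee$ as a two-sided dual on the projective subcategory. So the underlying rigid-algebra data of a symmetric rigid algebra in $\Rexf$ is the same thing as the underlying p-rigid category.

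Next I would handle the extra ``symmetric'' datum. A symmetric rigid algebra is, by definition, such a rigid algebra together with a natural isomorphism $\Sigma\colon \kappa\to\kappa\circ\sigma$ squaring to the identity and satisfying \cite[Definition~4.1~(H1)]{cyclic}. The key step is to translate, on the subcategory of projective objects, the axiom (H1) together with the square-to-identity condition into the statement that $\Sigma$ is the same as a monoidal (non-unital) natural isomorphism $-^{\vee\vee}\Rightarrow\id$. Concretely: $\kappa$ being $\cat{C}(I, X\otimes Y)$-like, a natural iso $\kappa\Rightarrow\kappa\circ\sigma$ amounts to a natural iso $\cat{C}(I,X\otimes Y)\cong\cat{C}(I,Y\otimes X)$; by the defining adjunction $\cat{C}(I,X\otimes Y)\cong\cat{C}(X^\vee,Y)$ (for $X$ projective) this is the data of a natural iso $\cat{C}(X^\vee,Y)\cong\cat{C}(Y^\vee,X)\cong\cat{C}(X,Y^{\vee\vee})$ (the last step using that $^\vee$ is a duality, hence an (anti)equivalence on projectives), i.e.\ by Yoneda a natural iso $-^{\vee\vee}\Rightarrow\id$ on projectives. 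I would then check that the cocycle/coherence condition (H1) corresponds exactly to this natural transformation being monoidal, and that $\Sigma^2 = \id$ corresponds to the usual compatibility of the pivotal structure with duality (this matches the discussion in \cite{cyclic} relating (H1)-cyclic structures to pivotal structures, in the rigid case). This makes the identification an equivalence of 2-groupoids rather than merely a bijection on objects: a morphism of symmetric rigid algebras is a monoidal equivalence compatible with $\kappa$ and $\Sigma$, which on projectives is exactly a monoidal equivalence compatible with the $-^{\vee\vee}\Rightarrow\id$ data, i.e.\ a morphism of p-pivotal categories, and similarly for 2-morphisms.

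The main obstacle I anticipate is the bookkeeping at the level of the \emph{non-unital} monoidal subcategory of projectives: the unit $I$ of $\cat{C}$ need not be projective, so the pivotal isomorphism $-^{\vee\vee}\Rightarrow\id$ genuinely lives only on $\Proj\cat{C}$ and one must be careful that all the adjunction and Yoneda manipulations above are performed inside this non-unital setting, where ``monoidal natural transformation'' means compatibility with $\otimes$ but not with a unit constraint. A secondary technical point is to verify that the counit $\varepsilon$ of the rigid-algebra structure really is (naturally) the functor corepresented by $I$ on projectives, so that $\kappa$ is the expected evaluation pairing; this should follow from uniqueness of adjoints once one knows $\eta\colon I\to A$ has right adjoint the hom-out-of-$I^\vee$ functor, but it requires matching the abstract rigid-algebra normalization of \cite{bjsdualizability} with the concrete duality. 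Once these compatibilities are pinned down, the equivalence of the two 2-groupoids is immediate, giving the proposition.
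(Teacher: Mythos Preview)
Your approach is essentially the paper's: invoke \cite[Definition-Proposition~4.1]{bjsdualizability} to identify the underlying rigid algebra with a p-rigid category, then unpack the symmetry $\Sigma$ on projectives via duality adjunctions and Yoneda to extract a natural isomorphism between duals, with condition~(H1) from \cite{cyclic} supplying the monoidal compatibility. One small slip to fix: applying the anti-equivalence $(-)^\vee$ to $\cat{C}(Y^\vee,X)$ gives $\cat{C}(X^\vee,Y^{\vee\vee})$, not $\cat{C}(X,Y^{\vee\vee})$; combined with your first isomorphism $\cat{C}(X^\vee,Y)\cong\cat{C}(Y^\vee,X)$ and Yoneda this still yields $Y\cong Y^{\vee\vee}$ on projectives, which is equivalent to the paper's formulation $Q^\vee\cong{}^\vee Q$.
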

\begin{proof}
	From~\cite[Definition-Proposition 4.1]{bjsdualizability},
	 we know that rigid algebras are exactly p-rigid categories. Hence,
	  we only have to work out the additional structure coming from the symmetry isomorphism $\Sigma$. The natural isomorphism $\Sigma$ of a rigid symmetric algebra is uniquely fixed by its value on the subcategory $\Proj \cat{C}$ of projective objects, which is a natural isomorphism between the two functors
	$(\Proj (\cat{C} \boxtimes  \cat{C}))^{\opp} \to \vect$  sending $P\boxtimes Q$ for $P,Q \in \Proj \cat{C}$ to  
	$\cat{C}(P\otimes Q,I)$ and $\cat{C}(Q\otimes P,I)$, respectively. Thanks to duality,
	this is equivalent to a natural transformation between the functors $\cat{C}(P ,Q^\vee)$ and $\cat{C}( P,{}^\vee Q)$ that via the Yoneda Lemma amount to a natural isomorphism $Q^\vee \to {}^\vee Q$. The condition~\cite[Definition~4.1~(H1)]{cyclic}
	 on $\Sigma$ implies that this is a pivotal structure.      
\end{proof}

\begin{theorem}\label{thmrigid}
	Let $\cat{C}$ be a cyclic associative algebra in $\cat{S}$, and $\Co$ its associated
	open topological field theory.
	Then the following are equivalent:
	\begin{pnum}
		\item There are adjunctions $\Co\left(\begin{gathered}\includegraphics[scale=0.24]{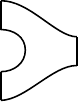}\end{gathered} \right) \dashv \Co\left( \begin{gathered}
		\includegraphics[scale=0.24]{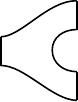}\end{gathered} \right)$ and $\Co\left(\includegraphics[scale=0.4]{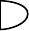}\right) \dashv \Co\left(\includegraphics[scale=0.4]{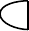}\right) $ such that the natural isomorphisms making $\Co\left( \begin{gathered}
		\includegraphics[scale=0.24]{ocp.pdf}\end{gathered} \right)$ a bimodule functor agree with those coming from the adjunction. \label{thmcharrigidi}
		\item $\cat{C}$ is a symmetric rigid algebra.  \label{thmcharrigidii}
	\end{pnum}
\end{theorem}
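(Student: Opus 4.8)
The statement is an equivalence between a topological condition on the open field theory $\widehat{\cat{C}}$ (existence of certain adjunctions compatible with the bimodule structure) and an algebraic condition on $\cat{C}$ (being a symmetric rigid algebra). The strategy is to translate the topological condition back into the algebraic world using the classification of Theorem~\ref{thmclassopen} and the identification of the relevant surfaces with the basic operations of the associative modular operad. First I would identify the surfaces appearing in the pictures: the surface labeled \texttt{op.pdf} is the pair of pants with two input intervals and one output interval, which under the modular extension $\widehat{\cat{C}}$ computes the multiplication $\mu : \cat{C}\boxtimes\cat{C}\to\cat{C}$ (more precisely, the functor $\cat{C}^{\boxtimes 3}\to\vect$ corepresented by $\mu$); the surface \texttt{ocp.pdf} is the same pair of pants read the other way, encoding the comultiplication $\delta$; the surfaces \texttt{1.pdf} and \texttt{c1.pdf} are the disk with one interval (unit $\eta$) and the disk read the other way (counit $\varepsilon$). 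This is exactly the content of the reconstruction in~\cite{cyclic} and~\cite{mwansular}: the value of $\widehat{\cat{C}}$ on a surface is built from these generators by gluing.

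**From topology to algebra.** Once the surfaces are identified, condition~\ref{thmcharrigidi} says precisely that $\widehat{\cat{C}}(\texttt{op}) \dashv \widehat{\cat{C}}(\texttt{ocp})$ and $\widehat{\cat{C}}(\texttt{1}) \dashv \widehat{\cat{C}}(\texttt{c1})$ as $1$-morphisms in $\cat{S}$, i.e.\ that $\mu$ has a right adjoint $\delta$ and $\eta$ has a right adjoint $\varepsilon$, and that the natural isomorphisms exhibiting $\widehat{\cat{C}}(\texttt{ocp})$ as a bimodule functor over the algebra $\cat{C}$ — which come from the operadic/gluing structure of $\open$, and hence are part of the data of the modular algebra — coincide with the canonical (lax) bimodule structure on the right adjoint $\delta$. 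But this is, verbatim, the definition of a rigid algebra in $\cat{S}$ (\cite[Definition~2.1.1]{decoppet}) recalled above. The only remaining point is that $\cat{C}$ carries, from the start, the structure of a \emph{cyclic} associative algebra, hence a symmetry isomorphism $\Sigma$ on the pairing $\kappa = \varepsilon\circ\mu$; thus when $\cat{C}$'s underlying algebra is rigid and $\kappa,\Delta$ are the induced (co)pairing, $\cat{C}$ is by definition a symmetric rigid algebra. So \ref{thmcharrigidi} $\Rightarrow$ \ref{thmcharrigidii}. Conversely, if $\cat{C}$ is a symmetric rigid algebra, then $\mu$ and $\eta$ have right adjoints with strong bimodule structure by definition, and unwinding the modular extension these adjunctions are exactly the adjunctions between the values of $\widehat{\cat{C}}$ on the corresponding surfaces; so \ref{thmcharrigidii} $\Rightarrow$ \ref{thmcharrigidi}.

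**The main obstacle.** The conceptual core is not hard, but the bookkeeping is: one must check that the bimodule structure on $\widehat{\cat{C}}(\texttt{ocp})$ \emph{induced by gluing in the operad $\open$} is literally the canonical lax bimodule structure on the right adjoint $\delta$ of $\mu$ that appears in~\cite{decoppet}'s definition of rigid algebra — i.e.\ that "the topological bimodule structure" and "the mate bimodule structure" are identified. This is a compatibility of two a priori different $2$-morphisms built from the same underlying $1$-morphisms, and it requires carefully tracking the gluing diagrams in $\open$ (the two ways of decomposing the four-holed sphere, say) against the defining triangle identities of the adjunction $\mu\dashv\delta$. I would handle this by drawing the relevant surfaces, using that the modular operad structure on $\Pi|B\Envint\As|$ is freely generated so that all such $2$-morphisms are determined by their effect on generators, and then invoking the uniqueness clause of Theorem~\ref{thmclassopen} to reduce the identification to the level of the cyclic algebra $\cat{C}$, where it becomes the statement that the mate construction is functorial. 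The symmetry/pivotality bookkeeping — matching $\Sigma$ with the pivotal structure as in Proposition~\ref{PropSymrigid} — is then a routine consequence and does not introduce new difficulties.
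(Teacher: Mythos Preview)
Your proposal is broadly correct and follows the same overall approach as the paper: translate the topological adjunction condition into the algebraic definition of a (symmetric) rigid algebra. Your argument for \ref{thmcharrigidi}$\Rightarrow$\ref{thmcharrigidii} matches the paper's almost verbatim.

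For \ref{thmcharrigidii}$\Rightarrow$\ref{thmcharrigidi}, however, you gloss over the one step the paper actually computes. You assert that ``unwinding the modular extension these adjunctions are exactly the adjunctions between the values of $\widehat{\cat{C}}$'', but $\widehat{\cat{C}}(\text{copants})$ is not \emph{defined} to be $\delta$: it is built purely from the cyclic data $(\mu,\eta,\kappa,\Delta)$, and is concretely the Frobenius-type comultiplication (schematically $(\mu\boxtimes\id)\circ(\id\boxtimes\Delta)$ after bending a leg with the copairing). That this agrees with the right adjoint $\delta$ to $\mu$ is a genuine identity, not a tautology. The paper proves it by a short graphical-calculus manipulation: substitute $\Delta=\delta\circ\eta$ into the Frobenius comultiplication and then use the \emph{strong} bimodule property of $\delta$ to slide $\mu$ through and collapse the expression to $\delta$. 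This single computation simultaneously yields the adjunction and the required bimodule compatibility; so what you flag as the ``main obstacle'' is not a separate bookkeeping step but rather the same identity read differently. Your proposed workaround via the uniqueness clause of Theorem~\ref{thmclassopen} and functoriality of mates is more circuitous and does not obviously close the gap: uniqueness tells you the open TFT is determined by the cyclic algebra, but it does not by itself identify two specific $2$-morphisms inside it.
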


If $\cat{S}=\Rexf$, then~\ref{thmcharrigidii} says exactly that $\cat{C}$ is p-pivotal by Proposition~\ref{PropSymrigid}.

\begin{proof}
	If $\cat{C}$ is a symmetric rigid algebra, the monoidal product $\Co\left(\begin{gathered}\includegraphics[scale=0.24]{op.pdf}\end{gathered} \right)$ 
	has a right adjoint $\delta :\cat{C}\to\cat{C}\boxtimes\cat{C}$, and this adjoint  is given by 
	$\Co\left( \begin{gathered}
		\includegraphics[scale=0.24]{ocp.pdf}\end{gathered} \right)$.
	Indeed,  $\Co\left( \begin{gathered}
		\includegraphics[scale=0.24]{ocp.pdf}\end{gathered} \right)$ is given by
	\begin{align}
		(\mu \boxtimes \id_{\cat{C}})\circ (\id_{\cat{C}}\boxtimes \Delta)    
		=(\mu\boxtimes \id_{\cat{C}})\circ (\id_{\cat{C}}\boxtimes (\delta \circ \eta)) \cong \delta \circ \mu \circ (\id_{\cat{C}}\boxtimes \eta) \cong \delta 
	\ . \label{eq23}
	\end{align}
	The second step uses the strong bimodule structure on $\delta$.
	Since $\cat{C}$ is rigid, the exact same argument as in the proof of \cite[Corollary~6.5]{mwansular} gives us the adjunction $\Co\left(\includegraphics[scale=0.4]{1.pdf}\right) \dashv \Co\left(\includegraphics[scale=0.4]{c1.pdf}\right) $.
	Therefore, \ref{thmcharrigidii} implies~\ref{thmcharrigidi}.

Condition~\ref{thmcharrigidi} implies that $\cat{C}$ is rigid: 
Both the multiplication and the unit have a right adjoint by definition, 
and the lax bimodule structure on the right adjoint of the multiplication 
is strong because it is required to be given by isomorphisms.  
\end{proof}

\section{Comparison with the string-net construction}
In this section, we compare
the modular extension $\Co$ of a pivotal finite tensor category $\cat{C}$ with the string-net construction of~\cite{sn}.
This will be the main technical tool for the proof Theorem~\ref{thmpcomp2d2}.

The \emph{string-net construction} of Levin-Wen~\cite{levinwen} can be used to construct categorified open-closed topological field theories. In the semisimple case, it was refined further in~\cite{kirillovsn} and more recently~\cite{bartlett}. A generalization of string-net techniques to pivotal bicategories is given in~\cite{fsy-sn}. 
In~\cite{sn}, string-nets are used to build an open-closed categorified topological field theory from a not necessarily semisimple pivotal finite tensor category $\cat{C}$, and it is shown that its closed part is the Lyubashenko modular functor~\cite{lyubacmp,lyu,lyulex} for $Z(\cat{C})$.
We briefly review the construction of the categorified string-net topological field theory closely following the presentation in~\cite[Section~2]{sn}.

A \emph{graph $\Gamma$ in a surface $\Sigma$} is a finite abstract graph $\Gamma$ together with an embedding of its geometric realization $|\Gamma|$ into $\Sigma$ mapping the endpoints of its leaves (half edges attached to only one vertex), and only those, to the open or closed boundary (see Figure~\ref{Fig:string_net} for an example).    

Let $\cat{C}$ be a {pivotal finite tensor category}.   Informally, the $\cat{C}$-string-net space associated to a surface $\Sigma$ consists of graphs in $\Sigma$ whose edges and vertices are labeled with objects and morphisms of $\Proj \cat{C}$, respectively, modulo relations holding in $\cat{C}$. This idea is implemented via a colimit: We write $\CGraphs(\Sigma)$ for the category whose objects consist of graphs $\Gamma$ in $\Sigma$ together with a labeling of its edges by objects of $\Proj \cat{C}$ and an orientation; we denote the labeling by $\underline{X}$, so that the entire object is a pair $(\Gamma,\underline{X})$. Morphisms are generated by replacing graphs in disks by corollas.
We would like to avoid reproducing here the full definition with all its technical details. 
The reader can find an example of such a replacement with a corolla within a disk in
Figure~\ref{Fig:string_net}. The details are given in~\cite[Definition~2.1]{sn}.    
A \emph{boundary label} for $\Sigma$ is a finite collection of points in the open and closed boundary labeled each
with an object $P\in \Proj\cat{C}$ and a sign $+$ or $-$ 
encoding whether edges ending at the point are incoming or outgoing; we agree that $+$ corresponds to outwards pointing orientation. A $\cat{C}$-labeled graph in $\Sigma$ has a corresponding boundary label. For a fixed boundary label $B$,
we denote by $\CGraphs(\Sigma;B)$ the full subcategory of $\CGraphs(\Sigma)$ of graphs restricting to $B$ on $\partial \Sigma$.  
There is a functor 
\begin{align} 
	\mathbb{E}_{\cat{C}}^{\Sigma,B} :\CGraphs(\Sigma;B) \to \Vect 
\end{align}
from $\cat{C}$-labeled graphs in $\Sigma$ with boundary label $B$
to the category $\Vect$ of
(not necessarily finite-dimensional)
vector spaces over $k$. It
sends a $\cat{C}$-labeled graph in $\Sigma$ with boundary label $B$ to the vector space of compatible labels of the vertices with morphisms in $\cat{C}$. The evaluation of the functor
on morphisms is induced by the graphical calculus for $\cat{C}$. We refer to~\cite[Definition 2.3]{sn} for more details. For the  functor to be well-defined, we need $\cat{C}$ to be pivotal.

\begin{figure}[h]
	\begin{center}
		\begin{overpic}[scale=0.6]
			{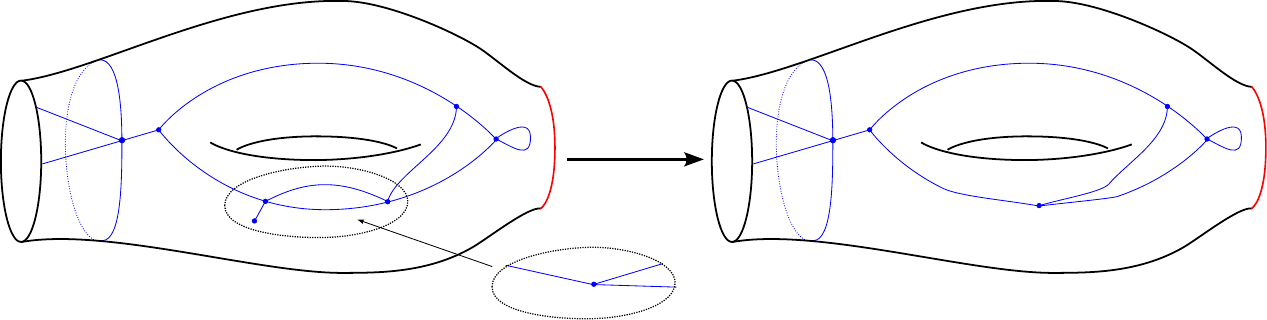}
			\put(3.5,11){\footnotesize$a$}
			\put(3.5,16.5){\footnotesize$b$}
			\put(10,17){\footnotesize$c$}
			\put(10.5,12.5){\footnotesize$d$}
			\put(25,21){\footnotesize$e$}
			\put(14.5,10){\footnotesize$f$}
			\put(20.5,7.5){\footnotesize$g$}
			\put(25,6.8){\footnotesize$h$}
			\put(25,11){\footnotesize$k$}
			\put(36,10){\footnotesize$l$}
			\put(33,14.5){\footnotesize$m$}
			\put(37,16){\footnotesize$n$}
			\put(42.5,13.8){\footnotesize$o$}
			\put(59.7,11){\footnotesize$a$}
			\put(59.7,16.5){\footnotesize$b$}
			\put(66.2,17){\footnotesize$c$}
			\put(67,12.5){\footnotesize$d$}
			\put(81.5,21){\footnotesize$e$}
			\put(71,10){\footnotesize$f$}
			\put(92.5,10){\footnotesize$l$}
			\put(89.5,14.5){\footnotesize$m$}
			\put(93.5,16){\footnotesize$n$}
			\put(98.5,13.8){\footnotesize$o$}
			\put(42,2){\footnotesize$f$}
			\put(48,4.2){\footnotesize$m$}
			\put(49,0.7){\footnotesize$l$}
			\put(26.5,1.3){\small{replace by:}}
		\end{overpic}
	\end{center}
	\caption{An illustration of an object of $\CGraphs(\Sigma)$ and a morphism corresponding to a disk replacement. The labels $a,\dots , o$ are elements of $\Proj \cat{C}$. We have not drawn the orientation associated to every edge. The free boundary of $\Sigma$ is on the right and drawn in red. }
	\label{Fig:string_net}
\end{figure}

\begin{definition}[$\text{\cite[Definition~2.3]{sn} following~\cite[Section~3.2]{fsy-sn}}$]
	Let $\cat{C}$ be a pivotal finite tensor category, $\Sigma$ a surface
	and $B$ a $\cat{C}$-boundary label for $\Sigma$. The \emph{string-net space}
	$\sn_{\cat{C}}(\Sigma;B)$ is defined as the colimit 
	\begin{align}
		\sn_{\cat{C}}(\Sigma;B) := \colimsub{(\Gamma,\underline{X})\in \CGraphs(\Sigma;B)}	\mathbb{E}_{\cat{C}}^{\Sigma,B}(\Gamma,\underline{X}) \ \ .
	\end{align}
\end{definition}

It is explained in \cite[Remark~2.17]{sn} that $\sn_{\cat{C}}(\Sigma;B)$ is isomorphic to the two-dimensional admissible skein module
for $\cat{C}$ and $\Sigma$ in the sense of~\cite{asm}. The mapping class group of $\Sigma$ geometrically acts on the string-net space. 

Using string-net spaces,
 we can assign a $k$-linear category $\snc(S)$ to every oriented 1-dimensional manifold $S$. An object of $\snc(S)$ is a collection of points in $S$ labeled with objects of $\Proj\cat{C}$ such that there is at least one point in every connected component of $S$. The space of morphisms from $\underline{X}$ to $\underline{Y}$ is the vector space $\snc([0,1]\times S,\underline{X}\sqcup \underline{Y})$ with the points in $\underline{X}$ labeled by $-$ and those in $\underline{Y}$ by $+$. Composition is defined by gluing of cylinders. 

The string-net construction naturally extends to a categorified topological field theory with values in the bicategory $\Bimodf$ of $k$-linear pre-finite  categories (see~\cite[Section 5]{sn}), bimodules
(a bimodule from $\cat{C}$ to $\cat{D}$ is a linear functor
 $M: \cat{C}\otimes \cat{D}^{\opp}\to \vect$, sometimes referred to as profunctor), and natural transformations. The composition of two bimodules $M:\cat{C}\otimes \cat{D}^{\opp}\to \vect$ and $M':\cat{D}\otimes \cat{E}^{\opp}\to \vect$ is defined by the coend $\int^{d\in \cat{D}}M(-,d)\otimes M'(d,-)$. 
Explicitly, the open-closed topological field theory built from string-nets assigns  
to a surface $\Sigma$ with incoming boundary $S$ and outgoing boundary $S'$ the bimodule
$
\snc(\Sigma,-,-):\snc(S)\otimes \snc(S')^{\opp} \to \vect $. 
This $\Bimodf$-valued field theory can be turned into a $\Rexf$-valued one by applying the finite free cocompletion, i.e.\ the equivalence
$\Bimodf\simeq \Rexf$ of symmetric monoidal bicategories \cite[Section~5]{sn} sending a pre-finite linear category $\cat{A}$ to the category of linear functors $\cat{A}^\opp \to \vect$,
 and a bimodule to the right exact functor it induces. We denote the $\Rexf$-valued open-closed topological field theory induced by $\snc$ by $\SNC$.

\begin{theorem}\label{thmcomp}
	Let $\cat{C}$ be a pivotal finite tensor category.
	Then the following constructions coincide, up to canonical equivalence,
	 as open $\Rexf$-valued topological field theories:
	\begin{pnum}
		\item The modular extension $\Co$ of $\cat{C}$.\label{thmcompi}
		\item The open part of the open-closed string-net  topological field theory $\SNC$. \label{thmcompiii}
		\end{pnum}
	\end{theorem}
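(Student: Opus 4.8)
The plan is to deduce this from the classification of open topological field theories rather than by directly comparing colimit formulas. By Theorem~\ref{thmclassopen}, the $2$-groupoid of $\Rexf$-valued open topological field theories is equivalent to that of pivotal Grothendieck--Verdier categories in $\Rexf$, i.e.\ of cyclic associative algebras in $\Rexf$ by \cite[Theorem~4.12]{cyclic}, and under this equivalence $\widehat{\cat{C}}$ is by construction the field theory corresponding to $\cat{C}$ with its given monoidal and pivotal structure. Hence it suffices to show that the open part $\SNC|_{\mathrm{open}}$ of $\SNC$ is an $\Rexf$-valued open topological field theory whose underlying cyclic associative algebra is again $\cat{C}$ with its given structure; the uniqueness part of Theorem~\ref{thmclassopen} then produces the canonical equivalence $\widehat{\cat{C}}\simeq\SNC|_{\mathrm{open}}$.

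First I would observe that restricting $\SNC$ along the inclusion of open bordisms into open-closed bordisms --- that is, forgetting the closed boundary circles and the gluings along them --- yields an $\Rexf$-valued modular algebra over $\open$; this is immediate since $\SNC$ is an $\Rexf$-valued open-closed topological field theory \cite[Section~5]{sn}. The substance is the identification of the resulting cyclic associative algebra, which, using the correspondence established in the proof of Theorem~\ref{thmclassopen}, may be read off from the values of $\SNC|_{\mathrm{open}}$ on a few small surfaces. Concretely this reduces to three computations with the string-net definitions of \cite[Section~2]{sn}: (i) the underlying category, namely the value of $\SNC$ on a single parametrized interval with one marked boundary point, is $\cat{C}$ --- unwinding the definition of the evaluation functors $\mathbb{E}_{\cat{C}}^{\Sigma,B}$, string-nets on a square with two marked points labeled by projectives have morphism spaces which, after corolla reduction, are the $\Hom$-spaces of $\cat{C}$, so that $\snc$ of the interval is equivalent to $\Proj\cat{C}$ and its finite free cocompletion under $\Bimodf\simeq\Rexf$ is $\cat{C}$; (ii) the binary generator of the cyclic associative operad corresponds to the disk with three parametrized intervals, and the string-net space of that disk with boundary points labeled $X,Y,Z\in\Proj\cat{C}$ is, after a single corolla replacement, the space of labelings of one trivalent vertex, i.e.\ the multiplication of $\cat{C}$ viewed as a cyclic associative algebra; (iii) the dihedral symmetry of the $\open$-algebra --- in particular the non-degenerate pairing (the value on the disk with two intervals) and the cyclic-symmetry isomorphism --- is induced by the rotations of these disks and reproduces exactly the pairing and the coherence datum~(H1) of \cite[Definition~4.1]{cyclic} built from the given pivotal structure of $\cat{C}$, i.e.\ the cyclic associative algebra structure on $\cat{C}$ of \cite[Theorem~4.12]{cyclic}.

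Putting (i)--(iii) together, the cyclic associative algebra underlying $\SNC|_{\mathrm{open}}$ is $\cat{C}$, and since all identifications are natural in $\cat{C}$, the equivalence $\widehat{\cat{C}}\simeq\SNC|_{\mathrm{open}}$ supplied by Theorem~\ref{thmclassopen} is canonical, which is the assertion of the theorem.

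I expect the main obstacle to be book-keeping rather than anything conceptual: one must carefully match the combinatorial string-net data (graphs in surfaces, orientations and signs of boundary points, the corolla relations, and the equivalence $\Bimodf\simeq\Rexf$) with the operadic data of a cyclic/modular associative algebra and its dihedral structure, and in particular check that $\snc$ of the interval free-cocompletes to $\cat{C}$ on the nose rather than to some Morita-equivalent category, and that the pivotal structure recovered from string-nets is the one we started with. Once these identifications are pinned down, no further computation is needed, as the uniqueness in Theorem~\ref{thmclassopen} does the rest.
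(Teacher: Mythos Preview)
Your proposal is correct and follows essentially the same approach as the paper: both arguments invoke Theorem~\ref{thmclassopen} to reduce the comparison to identifying the cyclic associative algebra underlying each open field theory with $\cat{C}$. The only difference is that the paper dispatches the identification of the cyclic associative algebra underlying $\SNC|_{\mathrm{open}}$ by citing \cite[Theorem~5.9 and Section~6]{sn}, whereas you sketch this computation by hand via the values on disks with one, two, and three boundary intervals; your sketch is accurate and amounts to what is carried out in \cite[Section~6]{sn}.
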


By $\Co$ and the open part of $\SNC$ coinciding up to canonical equivalence we mean that there is an equivalence of modular $\open$-algebras in the sense of \cite[Section~2.4]{cyclic} between them. 

\begin{proof}
	By Theorem~\ref{thmclassopen} it suffices to show that the two
	constructions
	 each produce a $\Rexf$-valued open topological field theory, i.e.\ a $\Rexf$-valued modular $\open$-algebra, whose restriction to disks with intervals embedded in its boundary is $\cat{C}$, seen as a cyclic associative algebra. For~\ref{thmcompi}, this holds by~\eqref{resextequiv0}.
	For $\SNC$ in~\ref{thmcompiii}, it follows from \cite[Theorem~5.9]{sn} 	that we obtain a $\Rexf$-valued	 open-closed modular functor. After restriction to the open part, we obtain an open topological field theory whose restriction to disks is $\cat{C}$, as cyclic associative algebra, as explained in \cite[Section~6]{sn}. This proves the equivalence of~\ref{thmcompi} and~\ref{thmcompiii}.	
	\end{proof}

For the next result, recall that we denote by $L:\cat{C} \to Z(\cat{C})$ the left adjoint to the forgetful functor $U:Z(\cat{C})\to \cat{C}$ from the Drinfeld center $Z(\cat{C})$ of a pivotal finite tensor category $\cat{C}$ to $\cat{C}$.

\begin{theorem}\label{thmpcomp2d2}
	Let $\cat{C}$ be a pivotal finite tensor category and $\Sigma$ a surface 
with $n$ boundary components, at least
one boundary component per connected component,
and exactly one interval embedded in each boundary component.
Then there are for $X_1,\dots,X_n \in \cat{C}$ canonical natural $\Map(\Sigma)$-equivariant
isomorphisms
\begin{align}
	\Co(\Sigma;X_1,\dots,X_n) \ra{\cong} \mathfrak{F}_{Z(\cat{C})}(\Sigma;LX_1,\dots,LX_n)  
\end{align}
between the value of the modular extension $\Co$ of $\cat{C}$ on $\Sigma$ and the space of conformal blocks for $Z(\cat{C})$ on $\Sigma$.
\end{theorem}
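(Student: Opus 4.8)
The plan is to prove the theorem via the string-net model. By Theorem~\ref{thmcomp}, $\widehat{\cat{C}}$ is the open part of the open-closed string-net field theory $\SNC$, and by~\cite{sn} the closed part of $\SNC$ is the Lyubashenko modular functor $\mathfrak{F}_{Z(\cat{C})}$ of the Drinfeld center. Hence it suffices to compare, inside the single open-closed field theory $\SNC$, its value on $\Sigma$ regarded as a surface with $n$ boundary intervals with its value on $\Sigma$ regarded as a surface with $n$ labeled boundary circles. The passage between these two pictures is implemented by the \emph{zip} cobordism $\ZIP\colon I \to S^1$ of the open-closed bordism category, whose time-reverse is the \emph{cozip} $\COZIP\colon S^1 \to I$: glueing one copy of $\ZIP$ to each boundary circle of $\Sigma$, matching its incoming interval with the embedded interval $J_i$ and its free boundary with the complementary free arc $F_i$, yields a surface diffeomorphic to $\Sigma$ but now carrying $n$ plain boundary circles.

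I would then proceed in four steps. First, mapping class groups: a diffeomorphism of $\Sigma$ fixing the $n$ embedded intervals extends by the identity on the glued zips to a diffeomorphism fixing the $n$ boundary circles, giving a canonical isomorphism between the group $\Map(\Sigma)$ on the left of~\eqref{comparisoneqn} and the group used by the Lyubashenko construction on the right, and this identification is compatible with the zip-glueing by construction. Second, the functor $\SNC(\COZIP)\colon Z(\cat{C}) \to \cat{C}$ induced by the cozip is the forgetful functor $U$ (this is part of the open-closed structure of $\SNC$ together with the identification $\SNC(S^1) \simeq Z(\cat{C})$ from~\cite{sn}), so that its two adjoints $L \dashv U$ and $U \dashv R$ both exist by~\cite{shimizuunimodular}. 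Third, and this is the crux, the functor $\SNC(\ZIP)\colon \cat{C} \to Z(\cat{C})$ induced by the zip is the \emph{left} adjoint $L$; concretely, zipping up the free arc $F_i$ complementary to a boundary interval labeled by $X \in \cat{C}$ produces, in the string-net model, the object $\int^{Y\in\cat{C}} Y \otimes X \otimes Y^\vee$ of $\cat{C}$ equipped with the canonical half-braiding, which is exactly $LX$ by the description of the adjoints of $U$ in~\cite{shimizuunimodular}. Fourth, the compatibility of $\SNC$ with glueing now turns the zipped-up surface into the asserted comparison: $\widehat{\cat{C}}(\Sigma;X_1,\dots,X_n)$, which by Theorem~\ref{thmcomp} is the corresponding value of $\SNC$, is carried isomorphically to $\mathfrak{F}_{Z(\cat{C})}(\Sigma;LX_1,\dots,LX_n)$, the isomorphism being natural in the $X_i$ because $L$ is a functor and $\Map(\Sigma)$-equivariant by the first step together with the functoriality of $\SNC$ under glueing.

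The main obstacle is the third step, the identification $\SNC(\ZIP)\cong L$. One must check not only that the underlying $\cat{C}$-object of the output is the coend $\int^{Y\in\cat{C}} Y\otimes X\otimes Y^\vee$ — and not a variant built from the opposite duality, or from the right adjoint $R$, which would involve the Nakayama functor and the distinguished invertible object and would make the statement false outside the unimodular case — but also that the half-braiding produced by evaluating string-nets on the zip coincides on the nose with the canonical one defining $LX$, and that the pivotal-structure and orientation conventions of~\cite{sn} are aligned with those used for $Z(\cat{C})$ and its Lyubashenko modular functor. Granting this, the remaining steps are formal consequences of Theorem~\ref{thmcomp}, of the glueing axioms for the open-closed field theory $\SNC$, and of the invariance of string-net spaces under the relevant diffeomorphisms.
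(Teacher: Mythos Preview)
Your proposal is correct and follows the same route as the paper: both identify $\widehat{\cat{C}}$ with the open part of $\SNC$ via Theorem~\ref{thmcomp} and then pass to $\mathfrak{F}_{Z(\cat{C})}$ using the results of~\cite{sn}. The only difference is granularity: the paper compresses your steps two through four into a single citation of~\cite[Theorem~7.1]{sn}, whereas you unpack this via the zip cobordism; the identification $\SNC(\ZIP)\cong L$ that you flag as the crux is exactly how the equivalence $\SNC(\mathbb{S}^1)\simeq Z(\cat{C})$ is set up in~\cite[Theorem~5.9]{sn} (as the paper itself recalls in the proof of Theorem~\ref{thmcf}), so your ``main obstacle'' is already established in the reference you are citing and need not be re-argued from scratch.
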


\begin{proof}
	We have natural $\Map(\Sigma)$-equivariant isomorphisms
	\begin{align} \Co(\Sigma;X_1,\dots,X_n) \ra{\cong} \SNC(\Sigma;X_1,\dots,X_n)\ra{\cong} \mathfrak{F}_{Z(\cat{C})}(\Sigma;LX_1,\dots,LX_n)  \ , 
	\end{align}
	where the first isomorphism is a consequence of 
	Theorem~\ref{thmcomp}, and the second follows from~\cite[Theorem~7.1]{sn}. 
	\end{proof}

	\begin{corollary}\label{cormcgforGV}
	Let $\Sigma$ be a surface with $n$ boundary components, at least one per connected component, and let $\cat{C}$ be a pivotal	 Grothendieck-Verdier category in $\Lexf$ or $\Rexf$. 
	Then the modular extension  $\Co(\Sigma;X_1,\dots,X_n)$ 
	of $\cat{C}$
	evaluated on $\Sigma$ and boundary labels in $\cat{C}$ carries a representation of the mapping class group of $\Sigma$
	that generalizes the space of conformal blocks, as mapping class group representation, after combination of left and right movers in the following sense:
	If $\cat{C}$ is a modular category, then
	$\Co(\Sigma;X_1,\dots,X_n)$ is $\Map(\Sigma)$-equivariantly isomorphic to the space of conformal blocks $\mathfrak{F}_{\bar{\cat{C}} \boxtimes\cat{C}}(\Sigma;\widetilde X_1,\dots,\widetilde X_n)$ for $\bar{\cat{C}}\boxtimes \cat{C}$ on $\Sigma$ with boundary labels $\widetilde X_i = \int^{Y \in \cat{A}} Y^\vee \boxtimes X_i \otimes Y$ for $1\le i\le n$. 
		\end{corollary}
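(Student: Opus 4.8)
The plan is to treat the two assertions separately, reducing the comparison statement to Theorem~\ref{thmpcomp2d2}. For the first assertion---the existence of the mapping class group representation---I would invoke Theorem~\ref{thmclassopen} directly: a pivotal Grothendieck-Verdier category $\cat{C}$ in $\Lexf$ or $\Rexf$ is a cyclic associative algebra, hence extends to a categorified open topological field theory $\widehat{\cat{C}}$, that is, a modular $\open$-algebra. Embedding one parametrized interval into each of the $n$ boundary components, the open topological field theory $\widehat{\cat{C}}$ assigns to $\Sigma$ a (left or right exact) functor $\cat{C}^{\boxtimes n}\to\vect$ together with an action of $\Map(\Sigma)$ by natural automorphisms; evaluating at $X_1,\dots,X_n$ yields the vector space $\widehat{\cat{C}}(\Sigma;X_1,\dots,X_n)$ with its $\Map(\Sigma)$-representation, which is made explicit by~\eqref{eqnformulaChat}.

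For the comparison statement I would assume $\cat{C}$ to be a modular category. Since a modular category is in particular a pivotal (indeed ribbon) finite tensor category, Theorem~\ref{thmpcomp2d2} applies and yields a canonical natural $\Map(\Sigma)$-equivariant isomorphism $\widehat{\cat{C}}(\Sigma;X_1,\dots,X_n)\cong\mathfrak{F}_{Z(\cat{C})}(\Sigma;LX_1,\dots,LX_n)$. It then remains to rewrite the right-hand side. I would use that a modular category is factorizable---one of the formulations of non-degeneracy of its braiding---so that the braiding and its inverse induce a ribbon equivalence $\Phi\colon Z(\cat{C})\ra{\simeq}\bar{\cat{C}}\boxtimes\cat{C}$, valid also in the not necessarily semisimple setting. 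Because the Lyubashenko modular functor is built canonically from its input category, a ribbon equivalence transports it $\Map(\Sigma)$-equivariantly, giving $\mathfrak{F}_{Z(\cat{C})}(\Sigma;LX_1,\dots,LX_n)\cong\mathfrak{F}_{\bar{\cat{C}}\boxtimes\cat{C}}(\Sigma;\Phi LX_1,\dots,\Phi LX_n)$.

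The final, and most delicate, step is to identify $\Phi LX$ with $\widetilde X=\int^{Y\in\cat{C}}Y^\vee\boxtimes(X\otimes Y)$. The point is that $\Phi$ intertwines the forgetful functor $U\colon Z(\cat{C})\to\cat{C}$ with the tensor-product functor $\bar{\cat{C}}\boxtimes\cat{C}\to\cat{C}$, $(A\boxtimes B)\mapsto A\otimes B$. Since $L$ is left adjoint to $U$, the composite $\Phi L$ is left adjoint to this tensor-product functor, and a coend-Yoneda computation identifies that left adjoint with $X\mapsto\int^{Y}Y^\vee\boxtimes(X\otimes Y)$, pivotality being used to identify the left and right duals. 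Chaining the three isomorphisms then produces the asserted $\Map(\Sigma)$-equivariant isomorphism $\widehat{\cat{C}}(\Sigma;X_1,\dots,X_n)\cong\mathfrak{F}_{\bar{\cat{C}}\boxtimes\cat{C}}(\Sigma;\widetilde X_1,\dots,\widetilde X_n)$.

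I expect the main obstacle to be precisely this last identification: one has to pin down which of the two duals occurs and verify that the half-braiding carried by $LX$ matches, under $\Phi$, the factorized description, and one has to check that the transport of the modular functor along $\Phi$ is genuinely compatible with the geometric $\Map(\Sigma)$-actions on both sides. The remaining ingredients---that a modular category is factorizable, and that $\mathfrak{F}_{Z(\cat{C})}$ is available in the stated generality including the non-spherical case---are supplied by the references quoted in the introduction, so beyond this coend bookkeeping the argument is essentially formal.
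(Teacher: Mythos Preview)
Your proposal is correct and follows essentially the same approach as the paper: invoke Theorem~\ref{thmclassopen} for the mapping class group action (placing one interval per boundary circle), then in the modular case combine Theorem~\ref{thmpcomp2d2} with factorizability $\bar{\cat{C}}\boxtimes\cat{C}\simeq Z(\cat{C})$ (due to \cite{shimizumodular}) to rewrite the right-hand side. The paper's proof is terser---it simply says ``the statement follows from Theorem~\ref{thmpcomp2d2}'' after recording factorizability---whereas you have usefully spelled out the identification $\Phi L X\cong\widetilde X$ via the adjunction argument, which the paper leaves implicit.
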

	
	If $\Sigma$ is connected of genus $g$ with one boundary component (for simplicity), then (in the $\Lexf$-version) one can show
	\begin{align}
		\Co(\Sigma;X)\cong \cat{C} \left(K,X \otimes \left(\otimes^{(4)}\left(\int^{Y,Y' \in \cat{C}} Y\boxtimes Y' \boxtimes DY \boxtimes DY' \right)\right)^{\otimes g}\right) ,\label{eqnformulaChat}
		\end{align}
		using a ribbon graph presentation of $\Sigma$ and the techniques of \cite[Example~7.3]{cyclic}; here $\otimes^{(4)}:\cat{C}^{\boxtimes 4}\to \cat{C}$ is the monoidal product seen as arity four operation.
	
	\begin{proof}[\slshape Proof of Corollary~\ref{cormcgforGV}]
		We obtain the mapping class group representations thanks to Theorem~\ref{thmclassopen}. In order to apply this result, we place exactly one interval on each boundary circle.
	Assume now that $\cat{C}$ is modular:
	Then $\cat{C}$, by the results of~\cite{shimizumodular}, is factorizable in the sense of~\cite{eno-d}, i.e.\ the monoidal product induces a ribbon equivalence $\bar{\cat{C}}\boxtimes \cat{C}\ra{\simeq} Z(\cat{C})$. Now the statement follows from Theorem~\ref{thmpcomp2d2}.
		\end{proof}

\section{The action of the algebra of class functions}
For a finite tensor category $\cat{C}$, one may define an algebra of class functions.
We give here the definition from~\cite{shimizucf}, where also the connection to more classical notions of class functions is made.
We denote by $R:\cat{C}\to Z(\cat{C})$ the right adjoint to the forgetful functor $U:Z(\cat{C}) \to \cat{C}$ from the Drinfeld center $Z(\cat{C})$ of $\cat{C}$ to $\cat{C}$.
Then the
\emph{algebra $\CF(\cat{C})$ of class functions of $\cat{C}$} is the vector space $\cat{C}(\mathbb{A},I)$, where $\mathbb{A}=\int_{X \in \cat{C}} X^\vee \otimes X = URI$ is the canonical end of $\cat{C}$. The algebra structure comes from the fact that $\cat{C}(\mathbb{A},I)=\cat{C}(URI,I)\cong\End_{Z(\cat{C})}(RI)$ can be identified with the endomorphisms of $RI$.

\begin{theorem}\label{thmcf}
		Let $\cat{C}$ be a unimodular pivotal finite tensor category and $\Sigma$ a connected  surface with at least one boundary component,
	$n\ge 0$ boundary intervals labeled by $X_1,\dots,X_n \in \cat{C}$ and
	$\ell \ge 0$ free boundary circles. 
	Then the vector space $\Co(\Sigma;X_1,\dots,X_n)$ associated by the modular extension $\Co$
	of $\cat{C}$ to $\Sigma$ and the labels $X_1,\dots,X_n$
	is naturally a module over $\CF(\cat{C})^{\otimes \ell}$, the $\ell$-th tensor power of the algebra of class functions of $\cat{C}$.
	\end{theorem}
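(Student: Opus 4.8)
The plan is to realise $\widehat{\cat{C}}(\Sigma;-)$ through the string-net model and to build the action of $\CF(\cat{C})^{\otimes \ell}$ geometrically, using that a free boundary circle has a collar annulus, and that the string-net space of an annulus---under stacking of cylinders---is the algebra $\CF(\cat{C})$ precisely when $\cat{C}$ is unimodular.

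First I would invoke Theorem~\ref{thmcomp} to replace $\widehat{\cat{C}}(\Sigma;X_1,\dots,X_n)$ by $\SNC(\Sigma;X_1,\dots,X_n)$; for $X_1,\dots,X_n \in \Proj\cat{C}$ this is the string-net space $\snc(\Sigma;\underline X)$ (equivalently, the admissible skein module of~\cite{asm}) with the labels on the $n$ boundary intervals and the $\ell$ circles left free, and the general case follows from right-exactness in each variable. Since the isomorphism of Theorem~\ref{thmcomp} is natural and $\Map(\Sigma)$-equivariant, it suffices to equip $\snc(\Sigma;\underline X)$ with a natural, $\Map(\Sigma)$-equivariant $\CF(\cat{C})^{\otimes \ell}$-module structure and to check that it transports.

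For the heart of the argument I would fix a free boundary circle $c$ and a collar $N \cong S^1 \times [0,1] \hookrightarrow \Sigma$ with $c = S^1\times\{0\}$. Because no leaves sit on the free boundary, every graph in $\Sigma$ may be isotoped off $N$, and a closed graph placed inside $N$ may then be adjoined to it; this produces a map $\snc(\Sigma;\underline X)\otimes \snc(S^1\times[0,1]) \to \snc(\Sigma;\underline X)$, where $\snc(S^1\times[0,1])$---the string-net space of the annulus with both circles free---is an associative unital algebra under stacking of cylinders, with unit the class of the empty graph. The module axioms are immediate from stacking closed graphs in nested sub-collars of $N$. The crucial point is the identification $\snc(S^1\times[0,1]) \cong \CF(\cat{C})$ \emph{as algebras}: disk moves reduce any closed graph in the annulus to a single non-contractible loop with one vertex, so the defining colimit computes the coend $\int^{P \in \Proj\cat{C}} \cat{C}(P,P)$ (the coend relation $g\circ f \sim f\circ g$ being a disk-replacement move), which is the zeroth Hochschild homology of $\cat{C}$; using the coend presentation of class functions from~\cite{shimizucf} together with the identification of the closed part of $\SNC$ with Lyubashenko's modular functor for $Z(\cat{C})$~\cite{sn}, this is matched with $\CF(\cat{C}) = \cat{C}(\mathbb{A},I)\cong \End_{Z(\cat{C})}(RI)$. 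The vector-space identification should hold for any pivotal finite tensor category, but matching the two algebra structures---and, relatedly, ensuring the resulting action is an honest module action rather than a twisted or merely projective one---is exactly where unimodularity ($\alpha \cong I$, equivalently $L \cong R$) is needed: it identifies the a priori ``coend-flavoured'' object produced by the closed string-net theory with the ``end-flavoured'' object $\mathbb{A} = URI$ underlying $\CF(\cat{C})$. This is the step at which I would import the factorization-homology techniques of~\cite{brochierwoike,shimizuunimodular}.

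Finally, for two distinct free circles the collars can be chosen disjoint, so the corresponding actions are supported in disjoint regions of $\Sigma$ and commute; hence the $\ell$ actions assemble to a $\CF(\cat{C})^{\otimes \ell}$-module structure. Naturality in $X_1,\dots,X_n$ follows from the construction together with right-exactness, and $\Map(\Sigma)$-equivariance holds because every diffeomorphism of $\Sigma$ may be isotoped to preserve the chosen collars, while $\CF(\cat{C}) \cong \snc(S^1\times[0,1])$ is canonical. I expect the algebraic identification $\snc(S^1\times[0,1]) \cong \CF(\cat{C})$ as algebras---and, inside it, the precise role of unimodularity in the end/coend ($R$/$L$) comparison---to be the main obstacle; an alternative is to run the whole argument in factorization homology, treating a free circle as a boundary condition for $\int_{S^1}\cat{C}$ and computing directly that the algebra acting there is $\CF(\cat{C})$ when $\cat{C}$ is unimodular, as in~\cite{brochierwoike}.
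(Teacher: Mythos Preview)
Your proposal is essentially correct and, in fact, your ``alternative'' in the final paragraph \emph{is} the route the paper takes. The paper does not work with collar-stacking of string-nets on the free annulus directly; instead it observes that the functors $\widehat{\cat{C}}(\Sigma^\varphi;-)$ obtained by inserting families of intervals $\varphi:(0,1)^{\sqcup J}\hookrightarrow S$ into the free boundary $S$ descend to factorization homology, producing a functor $\Phi_\cat{C}(\Sigma):\bigl(\int_S\cat{C}\bigr)\boxtimes\cat{C}^{\boxtimes n}\to\vect$ with $\Phi_\cat{C}(\Sigma;\cat{O}_S,-)\cong\widehat{\cat{C}}(\Sigma;-)$ for the quantum character sheaf $\cat{O}_S$. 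The $\CF(\cat{C})^{\otimes\ell}$-action then arises as the tautological action of $\End_{\int_S\cat{C}}(\cat{O}_S)$, and the remaining algebraic identification is made by reducing to $S=\mathbb{S}^1$ via monoidality, invoking the equivalence $\int_{\mathbb{S}^1}\cat{C}\simeq Z(\cat{C})$ from~\cite{sn} under which $\cat{O}_{\mathbb{S}^1}\mapsto LI$, and finally using unimodularity to get $L\cong R$ and hence $\End_{Z(\cat{C})}(LI)\cong\End_{Z(\cat{C})}(RI)=\CF(\cat{C})$.

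Your primary route via collar-stacking is morally the same construction unwound geometrically, and would work, but the step you flagged as the obstacle is genuinely the delicate one: the direct computation of $\snc(\mathbb{S}^1\times[0,1])$ (both circles free) as $\int^{P\in\Proj\cat{C}}\cat{C}(P,P)$ and its identification with $\CF(\cat{C})$ \emph{as algebras} is not as short as your sketch suggests in the non-semisimple case. The paper sidesteps this entirely: rather than computing the free-annulus skein space and matching it to $\CF(\cat{C})$, it identifies the acting algebra abstractly as $\End_{Z(\cat{C})}(LI)$ using the already-established equivalence $\int_{\mathbb{S}^1}\cat{C}\simeq Z(\cat{C})$, so that the only algebraic input needed is the single isomorphism $L\cong R$ from unimodularity. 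Your geometric picture makes the module structure more visibly local to each free circle; the paper's factorization-homology packaging makes the identification of the acting algebra cleaner and avoids any direct skein computation on the annulus.
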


\begin{proof}
We begin with the observation of~\cite[Section~4]{brochierwoike} that the modular extension descends to factorization homology, but instead of using  this fact for the modular extension of cyclic framed $E_2$-algebras, we profit from this principle one dimension lower. Let us give the details:
	Denote by $S$ the free boundary of $\Sigma$
	consisting of $\ell$ circles. 
	For any finite set $J$ and an embedding $\varphi : (0,1)^{\sqcup J} \to S$, 
	we denote by $\Sigma^\varphi$ the surface $\Sigma$ but with the intervals that are part of $\varphi$ as \emph{additional} parametrized intervals.
	The modular extension $\Co$ of $\cat{C}$ then provides  us with a functor
	\begin{align}	
		\Co(\Sigma^\varphi;-) :  \cat{C}^{\boxtimes J} \boxtimes \cat{C}^{\boxtimes n} \to \vect \ . \label{eqnthesefunctors}
		\end{align}
	In other words, we can choose to embed a family of intervals into the free boundary and evaluate with $\Co$. 
	The naturality in the embedding $\varphi$ implies that the functors~\eqref{eqnthesefunctors} descend to factorization homology and give us a functor
	\begin{align}
		\Phi_\cat{C}(\Sigma) : \left(   \int_S \cat{C}\right) \boxtimes \cat{C}^{\boxtimes n} \to \vect 
		\end{align}
	with a canonical isomorphism $\Phi_\cat{C}(\Sigma; \cat{O}_S,-)\cong \Co(\Sigma;-) : \cat{C}^{\boxtimes n}\to \vect$
	for the quantum character sheaf $\cat{O}_S \in \int_S \cat{C}$ from~\cite[Section~5.1]{bzbj}.
	In particular,
	\begin{align} \Co(\Sigma;X_1,\dots,X_n)\cong \Phi_\cat{C}(\Sigma; \cat{O}_S,X_1,\dots,X_n)
		\end{align} as vector spaces.
	But now clearly $\Co(\Sigma;X_1,\dots,X_n)$ becomes a module over the endomorphism algebra
	$\End_{\int_S \cat{C}} (\cat{O}_S)$ of $\cat{O}_S$.

	We will now finish the proof by showing
	that
	\begin{align} \End_{\int_S \cat{C}} (\cat{O}_S) \cong \CF(\cat{C})^{\otimes \ell}   \label{eqnfhcf}
	\end{align}
	as algebras. 
	Whenever $S=S' \sqcup S''$, we have an equivalence $\int_S \cat{C}\simeq \int_{S'}\cat{C}\boxtimes\int_{S''} \cat{C}$ under which $\cat{O}_S$ corresponds to $\cat{O}_{S'}\boxtimes \cat{O}_{S''}$;
	this is just the monoidality of factorization homology with respect to disjoint union.
	Therefore, we can prove~\eqref{eqnfhcf} for $S=\mathbb{S}^1$, i.e.\ $\ell=1$. 
	Up to this point, the proof would apply to any pivotal Grothendieck-Verdier category, but
	$ \End_{\int_{\mathbb{S}^1} \cat{C}} (\cat{O}_{\mathbb{S}^1}) \cong \CF(\cat{C})$
will need that $\cat{C}$ is actually a unimodular pivotal finite tensor category.
Indeed, thanks to $\cat{C}$ being a pivotal finite tensor category and the comparison result Theorem~\ref{thmcomp}, $\Co$ extends to an open-closed modular functor, namely the one built in~\cite{sn} using string-nets, that associates to the circle $\int_{\mathbb{S}^1} \cat{C}$ (that can be understood as finite free cocompletion of the string-net category of $\mathbb{S}^1$) or, equivalently, the Drinfeld center $Z(\cat{C})$.
The equivalence
 	$\int_{\mathbb{S}^1} \cat{C} \simeq Z(\cat{C})$
as linear categories is induced by the left adjoint $L:\cat{C}\to Z(\cat{C})$ to the forgetful functor $U:Z(\cat{C})\to \cat{C}$, see \cite[Theorem~5.9]{sn}.  In particular, under this equivalence $\cat{O}_{\mathbb{S}^1}$ corresponds to $LI \in Z(\cat{C})$, thereby proving that
$
	\End_{\int_{\mathbb{S}^1} \cat{C}} (\cat{O}_{\mathbb{S}^1}) \cong \End_{Z(\cat{C})}(LI) $.
Since $\cat{C}$ is unimodular, $L$ is also right adjoint to $U$~\cite{shimizuunimodular}.
This leaves us with the isomorphism of algebras
$
	\End_{\int_{\mathbb{S}^1} \cat{C}} (\cat{O}_{\mathbb{S}^1}) \cong \End_{Z(\cat{C})}(RI)=\CF(\cat{C})$.
	 This finishes the proof.	\end{proof}

\begin{remark}
	Spaces of conformal blocks have a double role: They are mapping class group representations and skein modules. 
	This can be  efficiently 
	 understood using factorization homology~\cite[Section~4]{brochierwoike}.
	The factorization homology definition of the skein algebra~\cite[Definition~2.6]{skeinfin} tells us that the algebra in~\eqref{eqnfhcf} can be seen as  a one-dimensional version of the  skein algebra. This establishes this very important double role also for spaces of conformal blocks in the open setting.
	\end{remark}

\section{A gluing result involving the partial trace}
In this section,
we use the results of the previous sections to prove a 
gluing result for the spaces of conformal blocks for a Drinfeld center.
More precisely, we decompose the gluing along a circle 
into the gluing along an interval and the partial trace map.

\begin{lemma}\label{lemmapartialtrace}
	Let $\cat{C}$ be a pivotal finite tensor category and $\Sigma \in \open(n)$.
	Then the extension of $\Co$ to an open-closed modular functor induces for any disk $D$ embedded in the interior of $\Sigma$ a natural map
$
	p:	\Co(\Sigma\setminus D;-) \to \Co(\Sigma;-)
	$
	coming from the embedding $\Sigma \setminus D \subset \Sigma$. We call $p$ the partial trace map associated to $\cat{C}$, $\Sigma$ and $D$.
	\end{lemma}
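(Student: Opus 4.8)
The plan is to construct $p$ directly on string nets, relying on the comparison in Theorem~\ref{thmcomp} between $\widehat{\cat{C}}$ and the open part of the open-closed string-net topological field theory $\SNC$. The first thing to pin down is the meaning of the left-hand side: the surface $\Sigma\setminus D$ carries one more boundary circle than $\Sigma$, namely $\partial D$, which we declare to be \emph{free} (nothing is embedded into it), so $\Sigma\setminus D$ is not an object of $\open$; nevertheless the open-closed refinement of $\widehat{\cat{C}}$ supplied by Theorem~\ref{thmcomp} does evaluate on it. Concretely, for a $\cat{C}$-boundary label $B$ on $\partial\Sigma$ — which is simultaneously a boundary label for $\Sigma\setminus D$, since $\partial D$ is free on that side and interior on the $\Sigma$ side — the values $\widehat{\cat{C}}(\Sigma\setminus D;-)$ and $\widehat{\cat{C}}(\Sigma;-)$ are obtained, via the equivalence $\Bimodf\simeq\Rexf$ (finite free cocompletion), from the string-net spaces $\sn_\cat{C}(\Sigma\setminus D;B)$ and $\sn_\cat{C}(\Sigma;B)$.

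Next I would exploit the embedding $\iota\colon\Sigma\setminus D\hookrightarrow\Sigma$. A $\cat{C}$-labeled graph in $\Sigma\setminus D$ has no leaf-endpoints on the free circle $\partial D$, hence — viewing $\partial D$ as an interior curve of $\Sigma$ that the graph simply avoids — it is in particular a $\cat{C}$-labeled graph in $\Sigma$; likewise a disk replacement performed inside $\Sigma\setminus D$ is a disk replacement inside $\Sigma$. This gives a functor $\CGraphs(\Sigma\setminus D;B)\to\CGraphs(\Sigma;B)$, and since $\iota$ preserves the cyclic order of half-edges around each vertex together with all edge orientations, the evaluation functor $\mathbb{E}_{\cat{C}}^{\Sigma,B}$ pulls back along it to $\mathbb{E}_{\cat{C}}^{\Sigma\setminus D,B}$. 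Restricting the colimit cocone defining $\sn_\cat{C}(\Sigma;B)$ along this functor produces a cocone on $\mathbb{E}_{\cat{C}}^{\Sigma\setminus D,B}$ over $\CGraphs(\Sigma\setminus D;B)$, and the universal property of $\sn_\cat{C}(\Sigma\setminus D;B)$ yields a linear map $p_B\colon\sn_\cat{C}(\Sigma\setminus D;B)\to\sn_\cat{C}(\Sigma;B)$; unwinding definitions, $p_B$ is exactly the map on string-net (equivalently, admissible skein) spaces induced by including graphs along $\iota$. Letting $B$ vary, the functors $\iota$ and the identification of evaluation functors are compatible with the cylinder-gluing morphisms in $\snc(S)$, so the $p_B$ assemble into a morphism of bimodules, which after finite free cocompletion becomes the claimed natural transformation $p\colon\widehat{\cat{C}}(\Sigma\setminus D;-)\to\widehat{\cat{C}}(\Sigma;-)$. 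It is manifestly unchanged under isotopies of $D$, since isotopic disks give isotopic embeddings $\iota$, and hence the same map on string-net spaces.

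The step that has to be handled with care — and the reason, anticipated in the text before the statement, that the construction of $p$ uses rigidity — is precisely the first one: the modular extension $\widehat{\cat{C}}$ by itself is defined only on surfaces with embedded intervals, and $\Sigma\setminus D$ is not of that form, so $\widehat{\cat{C}}(\Sigma\setminus D;-)$ makes sense only through the open-closed string-net model, which is available solely for pivotal finite tensor categories. Granting this, everything else is a routine application of the universal property of the string-net colimit, and the compatibilities in the previous paragraph are straightforward precisely because the embedding $\iota$, the disk replacements, and the graphical calculus are all local on the surface.
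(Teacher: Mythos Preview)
Your proof is correct and follows essentially the same approach as the paper: the paper's argument is the one-line observation that string-nets are by definition functorial with respect to embeddings of surfaces, so $\Sigma\setminus D\hookrightarrow\Sigma$ induces $\SNC(\Sigma\setminus D;-)\to\SNC(\Sigma;-)$, and then one invokes Theorem~\ref{thmcomp}. You have simply unpacked this functoriality explicitly via the functor $\CGraphs(\Sigma\setminus D;B)\to\CGraphs(\Sigma;B)$ and the universal property of the string-net colimit, and you have correctly isolated the role of rigidity as making $\widehat{\cat{C}}(\Sigma\setminus D;-)$ meaningful through the open-closed extension.
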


\begin{proof}
	By their very definition string-nets are functorial with respect to embeddings of surfaces. In particular,
	the inclusion $\Sigma \setminus D \subset \Sigma$ induces a map $\SNC(\Sigma \setminus D;-)\to \SNC(\Sigma;-)$. 	
	Now the statement follows from Theorem~\ref{thmcomp}.
	\end{proof}

\begin{remark}\label{rempartialtrace}
Let us justify the use of the term `partial trace' in Lemma~\ref{lemmapartialtrace} by showing that in a specific special case it actually reduces to the partial trace:	Suppose that $\Sigma$ is itself a disk, then $\Sigma \setminus D$ is an annulus $A$. With excision~\cite[Theorem~3.2 \& Proposition~4.2]{sn} for string-nets, we see
(in the $\Lexf$-version) that
	$\SNC(A;-)\cong \cat{C}(I,\mathbb{F}\otimes - )$ for the coend $\mathbb{F}=\int^{X \in \cat{C}}X^\vee \otimes X$ and $\SNC(\Sigma;-)\cong \cat{C}(I,-)$, and that
	 $p$ in this case is
	induced by the map $\mathbb{F}\to I$ coming from the evaluations $X^\vee \otimes X\to I$. 
	If we see a map $f: X \to X \otimes Y$ as a map $I \to X^\vee \otimes X \otimes Y \to \mathbb{F}\otimes Y$ and hence as an element $f \in \SNC(A;Y)$ (denoted by the same symbol by abuse of notation) then $p(f)$ is the partial trace of $f$ taken over $X$.
	Note that the partial trace coincides also with the counit of the adjunction $\Co\left(\begin{gathered}\includegraphics[scale=0.24]{op.pdf}\end{gathered} \right) \dashv \Co\left( \begin{gathered}
		\includegraphics[scale=0.24]{ocp.pdf}\end{gathered} \right)$ that we have at our disposal since $\cat{C}$ is rigid (Theorem~\ref{thmrigid}). 
	\end{remark}

\begin{theorem}\label{thmsewing}
	Let $\cat{C}$ be a pivotal finite tensor category and $\Sigma$ a surface with $n+2$ boundary components, $n\ge 1$, at least one per connected component.
Consider a sewing $\Sigma \to \Sigma'$ that 
identifies two boundary components of $\Sigma$. 
Then for $X,Y_1,\dots,Y_n \in \cat{C}$, the following diagram commutes:
\begin{equation}
	\begin{tikzcd}
		\mathfrak{F}_{Z(\cat{C})}(\Sigma; LX^\vee, LX,LY_1,\dots,LY_n)   \ar[]{rr}{\cong}  \ar[swap]{dd}{\text{$\mathfrak{F}_{Z(\cat{C})}$ evaluated on sewing along circle}} && \Co(\Sigma;X^\vee,X,Y_1,\dots,Y_n) \ar{d}{\text{$\Co$ evaluated on sewing along an interval}}
		\\ && \Co(\Sigma\setminus D ;Y_1,\dots,Y_n) \ar{d}{\text{partial trace map}}
		\\ \mathfrak{F}_{Z(\cat{C})}(\Sigma'; LY_1,\dots,LY_n) \ar[]{rr}{\cong} && \Co(\Sigma ;Y_1,\dots,Y_n)\ , 
	\end{tikzcd} 
\end{equation}
where the horizontal isomorphisms
are the ones from Theorem~\ref{thmpcomp2d2}.
	\end{theorem}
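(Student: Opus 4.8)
The plan is to reduce the commutativity of the square to a local computation on a small piece of the surface, exploiting the fact that all four corners of the diagram are built in a way that is functorial with respect to embeddings of surfaces. The key conceptual point is that both vertical composites on the right-hand side of the square -- first the sewing along an interval $\widehat{\cat{C}}(\Sigma;X^\vee,X,Y_1,\dots,Y_n)\to\widehat{\cat{C}}(\Sigma\setminus D;Y_1,\dots,Y_n)$, and then the partial trace $p:\widehat{\cat{C}}(\Sigma\setminus D;-)\to\widehat{\cat{C}}(\Sigma;-)$ -- are constructed inside the string-net model $\SNC$ (by Theorem~\ref{thmcomp} and Lemma~\ref{lemmapartialtrace}), and they are both supported in a neighborhood of the two boundary circles being glued together with the disk $D$ used in the partial trace. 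On the other hand, the left-hand vertical map is Lyubashenko-type sewing along a boundary circle for the modular functor $\mathfrak{F}_{Z(\cat{C})}$, which is equally local: it only modifies the data near the circle that gets glued. So the first step is to isolate a model surface -- essentially a pair of pants $P$ with two of its boundary circles carrying the labels $LX^\vee, LX$ (resp. $X^\vee, X$) and the third being the new glued circle -- onto which everything of interest restricts, and observe via the naturality/gluing axioms of both $\mathfrak{F}_{Z(\cat{C})}$ and $\widehat{\cat{C}}$ (together with the $\Map(\Sigma)$-equivariant comparison isomorphisms of Theorem~\ref{thmpcomp2d2}) that it suffices to check the square for this local model.

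The second step is to identify the composite ``sew along interval, then partial trace'' more explicitly. The sewing along an interval is the operadic self-composition in $\open$ that glues the interval on the $X^\vee$-boundary to the interval on the $X$-boundary; after this gluing the two boundary circles merge into a single circle bounding a disk $D$, and the label that appears is $X^\vee \otimes X$ sitting on a new interval. Then the partial trace $p$ precisely contracts this by applying the evaluation $X^\vee\otimes X\to I$ in the string-net calculus -- this is exactly the content of Remark~\ref{rempartialtrace}, where it is shown that $p$ is induced by $\mathbb{F}=\int^{X}X^\vee\otimes X\to I$, and hence on the $X^\vee\otimes X$-summand is the evaluation. Combining these two, the right-hand composite is: sew the two intervals, then evaluate $X^\vee\otimes X\to I$. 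On the left, Lyubashenko sewing along the circle for a modular functor built from a factorizable / modular category is, by construction (see the references to \cite{lyubacmp,lyu,lyulex} and the description of $\mathfrak{F}_{Z(\cat{C})}$ via \cite{mwcenter,brochierwoike}), given by coevaluation into $\bigoplus$ (or the coend over) the simple objects on the glued circle followed by the canonical pairing -- in the present incarnation, because the boundary labels are $LX^\vee$ and $LX$ and $L$ is monoidal up to the relevant coherence, this sewing is the image under $L$ of precisely the evaluation $X^\vee\otimes X\to I$ inside $Z(\cat{C})$, transported across the equivalence $\int_{\mathbb{S}^1}\cat{C}\simeq Z(\cat{C})$ of \cite[Theorem~5.9]{sn}.

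The third step is then to check that the two horizontal isomorphisms of Theorem~\ref{thmpcomp2d2} intertwine these two descriptions. Since the horizontal isomorphisms are themselves assembled, in the proof of Theorem~\ref{thmpcomp2d2}, as the composite $\widehat{\cat{C}}(\Sigma;-)\xrightarrow{\cong}\SNC(\Sigma;-)\xrightarrow{\cong}\mathfrak{F}_{Z(\cat{C})}(\Sigma;L-)$, and since $\SNC$ \emph{is} the open part of the open-closed string-net modular functor whose closed part is $\mathfrak{F}_{Z(\cat{C})}$ (by \cite[Theorem~5.9, Theorem~7.1]{sn}), the sewing along a circle on the $\mathfrak{F}_{Z(\cat{C})}$-side is \emph{by definition} the string-net gluing of the two boundary circles; so the entire square lives inside the single open-closed string-net field theory $\SNC$, and commutes because all its arrows are structure maps of that one field theory (operadic composition and the embedding-induced map) evaluated on the evident morphism of bordisms. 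Concretely: ``sew two circles in $\SNC$'' $=$ ``sew the two intervals, producing a disk $D$, then fill/cap using the string-net excision of Remark~\ref{rempartialtrace}, i.e.\ apply $p$'' -- this is an identity of operations in the open-closed bordism category, hence holds after applying the symmetric monoidal functor $\SNC$.

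\textbf{Main obstacle.} The delicate point is the precise matching in the second and third steps: namely that Lyubashenko's circle-sewing for $\mathfrak{F}_{Z(\cat{C})}$ with the specific labels $LX^\vee, LX$ equals the $L$-image of the evaluation $X^\vee\otimes X\to I$, and more fundamentally that this coincides with the string-net circle gluing once one passes through the equivalence $\int_{\mathbb{S}^1}\cat{C}\simeq Z(\cat{C})$ and identifies $\cat{O}_{\mathbb{S}^1}$ with $LI$ (as in the proof of Theorem~\ref{thmcf}). Establishing this requires unwinding the compatibility between the string-net gluing axiom and Lyubashenko's coend construction -- essentially the content behind \cite[Theorem~7.1]{sn} -- at the level of the \emph{gluing structure maps}, not just the underlying vector spaces, and tracking that the half-braidings introduced by $L$ on the two factors $LX^\vee$ and $LX$ are compatible with the pairing. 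Once this identification of gluing maps is in hand, the commutativity of the square is forced, because then every arrow in it is a structure morphism of one and the same open-closed field theory $\SNC$, and the diagram is the image of a commuting diagram of bordisms.
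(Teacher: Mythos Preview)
Your third step is exactly the paper's proof, and once you have it the first two steps are superfluous. The paper argues directly: since \cite[Theorem~7.1]{sn} gives $\mathfrak{F}_{Z(\cat{C})}\simeq\SNC$ \emph{as modular functors}, the circle-sewing on the left is by definition the string-net circle gluing; excision for string-nets \cite[Theorem~3.2]{sn} then factors that circle gluing as interval gluing followed by the embedding $\Sigma\setminus D\hookrightarrow\Sigma$; and transporting across Theorem~\ref{thmcomp} identifies these two operations with the right-hand column. No reduction to a local pair-of-pants model is needed, and no explicit unwinding of Lyubashenko's coend formula is needed either.

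Your ``main obstacle'' is therefore overstated. You write that one must check the comparison $\mathfrak{F}_{Z(\cat{C})}\simeq\SNC$ at the level of gluing structure maps and not merely vector spaces, and that this requires tracking half-braidings on $LX^\vee$, $LX$. But an equivalence of modular functors \emph{is} an equivalence that respects the gluing maps; this is already the content of \cite[Theorem~7.1]{sn} and you may simply invoke it. Once that is granted, the diagram commutes because, as you yourself say, every arrow is a structure map of the single open-closed field theory $\SNC$ applied to an evident identity of bordisms. So your proof is correct, just longer than necessary: drop the local reduction and the explicit Lyubashenko computation, and state up front that the equivalence of modular functors carries circle-sewing to circle-sewing.
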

	
	\begin{proof}
		Since $\mathfrak{F}_{Z(\cat{C})}\simeq \SNC$ as modular functors~\cite[Theorem~7.1]{sn},
		the gluing operation for the modular functor $\mathfrak{F}_{Z(\cat{C})}$ is exactly the gluing for string-nets which by the excision result~\cite[Theorem~3.2]{sn} for string-nets is the composition of gluing along the boundary intervals (here by assumption exactly one per boundary component)
		and the embedding of the string-nets; this is depicted schematically in Figure~\ref{Figgluing}.
		\begin{figure}[h!]
			\begin{center}
				\begin{overpic}[scale=0.16]
					{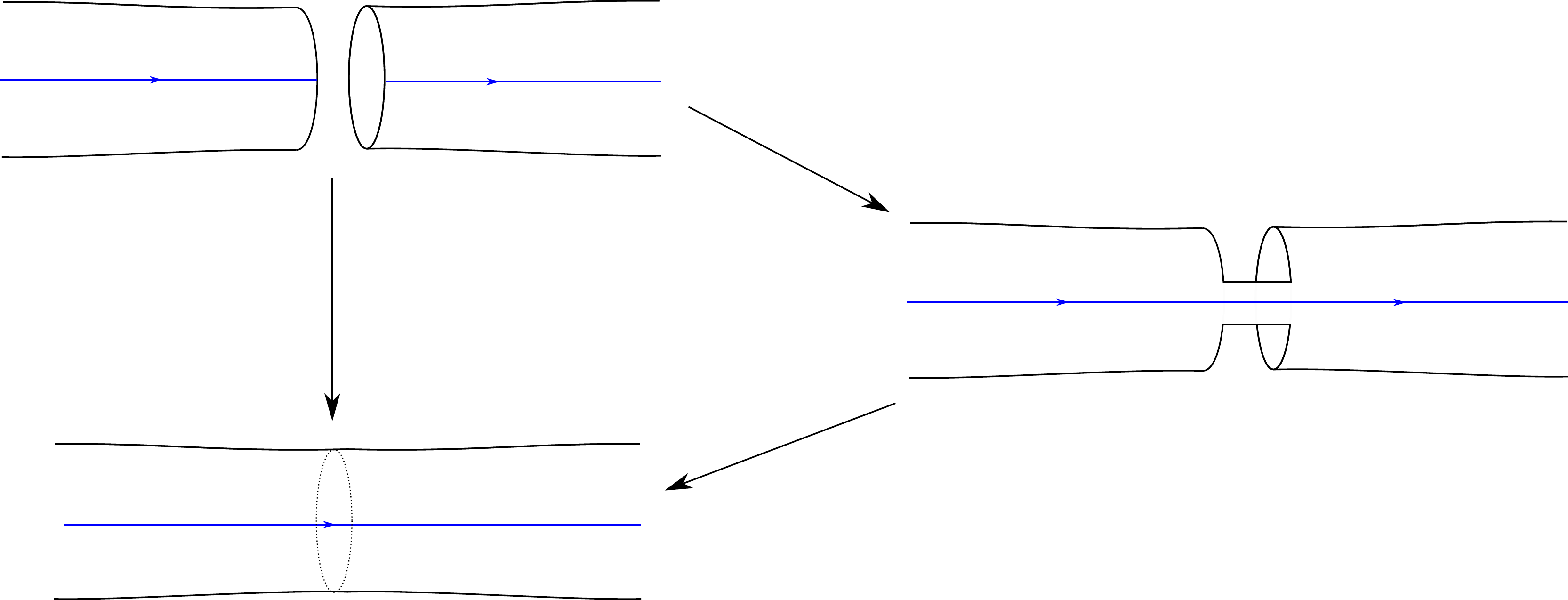}
					\put(15,5.5){\color{blue}$X$}
					\put(10,34){\color{blue}$X$}
					\put(30,34){\color{blue}$X$} 
					\put(70,20){\color{blue}$X$}
					\put(-2,4.5){\small$\dots$}
					\put(43,4.5){\small$\dots$}
					\put(-6.5,33){\small$\dots$}
					\put(44,33){\small$\dots$}
					\put(102,19){\small$\dots$}
					\put(53,19){\small$\dots$}
					\put(52,28.5){gluing along $[0,1]$}
					\put(-8,19){gluing along $\mathbb{S}^1$} 
					\put(52,7.5){embedding of string-nets}
				\end{overpic}
			\end{center}
			\caption{The gluing along a boundary circle can be decomposed into the gluing along a boundary interval and an embedding.}
			\label{Figgluing}
		\end{figure} 
	
		But the gluing of the string-nets along an interval can, when composed with the isomorphism $\SNC(\Sigma;-)\cong \Co(\Sigma;-)$ from Theorem~\ref{thmcomp},
		agrees with the gluing along intervals for the open modular functor $\Co$ while the embedding of the string-nets along $\Sigma \setminus D \to \Sigma$ yields, when transferred to $\Co$,
		 exactly the partial trace.
		\end{proof}

\small

\noindent \textsc{Perimeter Institute,  N2L 2Y5 Waterloo, Canada} \\[2ex] \noindent \textsc{Université Bourgogne Europe, CNRS, IMB UMR 5584, F-21000 Dijon, France}

\end{document}